\newtheorem{theorem}{Theorem}
\newtheorem{lemma}[theorem]{Lemma}
\newcommand{\R}{\mathbb R}
\newcommand{\N}{\mathbb N}
\newcommand{\Z}{\mathbb Z}
\newcommand{\E}{\mathbb E}
\newcommand{\eins}{\mathbbm 1}
\DeclareMathOperator{\Var}{Var}
\newcommand{\BB}{\mathcal{B}}
\newcommand{\QQ}{\mathcal{Q}}
\DeclareMathOperator{\disc}{disc}
\def\calD{\mathcal{D}}
\begin{document}
\sloppypar

\title{A Sharp Discrepancy Bound for Jittered Sampling}

\author{Benjamin Doerr\\ Laboratoire d'Informatique (LIX)\\ CNRS\\ \'Ecole Polytechnique\\ Institut Polytechnique de Paris\\ Palaiseau\\ France}

\maketitle

\begin{abstract}
  For $m, d \in {\mathbb N}$, a jittered (or stratified) sampling point set $P$ having $N = m^d$ points in $[0,1)^d$ is constructed by partitioning the unit cube $[0,1)^d$ into $m^d$ axis-aligned cubes of equal size and then placing one point independently and uniformly at random in each cube. We show that there are constants $c > 0$ and $C$ such that for all $d$ and all $m \ge d$ the expected non-normalized star discrepancy of a jittered sampling point set satisfies \[c \,dm^{\frac{d-1}{2}} \sqrt{1 + \log(\tfrac md)} \le {\mathbb E} D^*(P) \le C\, dm^{\frac{d-1}{2}} \sqrt{1 + \log(\tfrac md)}.\]
  This discrepancy is thus smaller by a factor of $\Theta\big(\big(\frac{1+\log(m/d)}{m/d}\big)^{1/2}\big)$ than the one of a uniformly distributed random point set (Monte Carlo point set) of cardinality~$m^d$. This result improves both the upper and the lower bound for the discrepancy of jittered sampling given by Pausinger and Steinerberger (Journal of Complexity~(2016)). It also removes the asymptotic requirement that $m$ is sufficiently large compared to $d$.   
\end{abstract}

\section{Introduction}

\subsection{Star Discrepancy}

The \emph{star discrepancy} $D^*(P)$ of a set $P$ of $N$ points in the $d$-dimensional unit cube $[0,1)^d$ is a measure for the uniformity of the distribution of these points. It is defined by 
\begin{equation}\label{eq:disc}
D^*(P) := \sup_{B \in \BB} \big| |P \cap B| - N \lambda(B) \big|,
\end{equation}
where $\lambda(\cdot)$ denotes the Lebesgue measure and $\BB$ is the set of all axis-parallel rectangles $[0,x) := \prod_{i=1}^d [0,x_i)$, $x = (x_1,\dots,x_d) \in [0,1)^d$. We call $[0,x)$ a rectangle anchored in the origin or a \emph{box}. The star discrepancy thus is a worst-case measure for how well $P$ satisfies the target of having the fair number $N\lambda(B)$ of points in each box $B$. 

Evenly distributed points sets and the star discrepancy have found applications in various areas such as machine learning~\cite{AvronSYM16}, heuristic search~\cite{KimuraM05,OwenT05,TeytaudG07}, statistics~\cite{FangW93}, and computer graphics~\cite{Owen03}. Most prominent is its role in numerical integration, where the \emph{Koksma-Hlawka inequality}~\cite{Koksma42,Hlawka61} bounds the integration error in terms of the star discrepancy: For all functions $f : [0,1]^d \to \R$ having variation in the sense of Hardy and Krause bounded by $1$, we have \[\bigg| \int_{[0,1]^d} f(x) dx - \frac 1 {|P|} \sum_{p \in P} f(p) \bigg| \le \frac 1 {|P|} D^*(P).\]

We remark that in this work we use the non-normalized version of the star discrepancy as defined above, which is more common in those areas of discrepancy theory that aim at a unified view on discrepancies and exploit connections between different discrepancy notions, e.g., between geometric discrepancies like the star discrepancy and combinatorial discrepancies like hypergraph discrepancies. See the books of Matou\v sek~\cite{Matousek99} and Chazelle~\cite{Chazelle00} for an introduction to this field. In contexts closer related to numerical integration, motivated by results like the Koksma-Hlawka inequality, a normalized version of the star discrepancy is more common. The normalized star discrepancy is exacty the notion of~\eqref{eq:disc} multiplied by a factor of $\frac 1 {|P|}$. There is little risk of confusion since the normalized version is always at most one and usually far less than one, whereas our notion is at least $1/2$ and usually much larger than $1$.

\subsection{Estimates for the Star Discrepancy} 

The interest in discrepancies from various research communities has led to a huge body of research (see, e.g., \cite{Chazelle00,DickP10,DrmotaT97,Matousek99,Niederreiter92}), which we cannot fully review here. The classic view on geometric discrepancies is to treat the dimension $d$ as a constant and investigate the asymptotic behavior for growing numbers $N$ of points. In this view, a large number of constructions of point sets has been exhibited that have a discrepancy of $D^*(P) = O(\log(N)^{d-1})$. It is also known that such a polylogarithmic discrepancy cannot be avoided, though the optimal exponent is not known and finding it is a famous open problem, see, e.g.,~\cite{BilykL13}. As a side remark, we note that axis-parallel regular grids have a discrepancy of order $\Theta(N^{(d-1)/d})$ and uniformly distributed random point sets (Monte Carlo point sets) have a discrepancy of order $\Theta(\sqrt N)$, both in expectation and with high probability. Thus both are not competitive when treating $d$ as a constant and looking for asymptotic discrepancy guarantees in terms on~$N$.

From the viewpoint of numerical integration in high dimension, a behavior exponential in $d$ like $(\log N)^{d-1}$ is not very desirable, since such bounds become interesting often only for numbers $N$ of points that are far beyond any practical meaning. For this reason, Heinrich, Novak, Wasilkowski, and Wo\'{z}niakowski~\cite{HeinrichNWW01} started the quest for discrepancy bounds that both make the dependence on $d$ fully explicit and that give reasonable discrepancy guarantees also when $N$ is only of moderate size compared to $d$. Interestingly, this brought random constructions back on stage, and in fact, they are at the moment the best constructions in the regime where $N$ is not very large (say exponential) compared to $d$. 

In~\cite{HeinrichNWW01}, Heinrich et al.\ prove that uniformly distributed random point sets have an expected discrepancy of order $\sqrt{dN}$, that is, there is a constant $C$ such that for all $d$ and $N$ a set $P$ of $N$ points chosen independently and uniformly at random in $[0,1)^d$ satisfies $\E D^*(P) \le C \sqrt{dN}$. This is asymptotically tight~\cite{Doerr14} in the sense that  there is a constant $c>0$ such that for all $d$ and all $N \ge d$ the corresponding uniformly distributed random point set satisfies $\E D^*(P) \ge c \sqrt{dN}$. 

Determining the leading constant $C$ remains a major open problem. For the upper bound, the original proof of Heinrich et al.\ does not easily reveal information on~$C$.  Aistleitner~\cite{Aistleitner11} gave an alternative, more direct proof that also shows that with positive probability, $D^*(P) \le 10 \sqrt{dN}$. The currently strongest estimate, lowering the $10$ to $2.525$, is due to Gnewuch and Hebbinghaus~\cite{GnewuchH21}. For the lower bound, the elementary proof of~\cite{Doerr14} clearly can be made more precise and then give a reasonable constant, but this has not been done so far. 

\subsection{Jittered Sampling}

Given the success of random point sets, it is natural to think of constructions that employ randomness in a more clever way than just by taking all decisions independently and uniformly at random. The two most prominent dependent randomized constructions are \emph{Latin hypercube samplings}~\cite{McKayBC79} and  \emph{jittered sampling} (also called \emph{stratified sampling})~\cite{Bellhouse81,CookPC84}. While the discrepancy of Latin hypercube samples was analyzed only recently~\cite{DoerrDG18,GnewuchH21} in the paradigm of not treating $d$ as a constant, the first such analysis for jittered sampling was conducted already in 2004. 

Assume that we can write $N = m^d$ for some integer~$m$. To obtain a random $N$-point set $P$ via jittered sampling, we partition the unit cube $[0,1)^d$ into $m^d$ axis-parallel cubes of identical size and, independently, place a uniformly distributed random point  in each cube. 

We formulate all results in the following in terms of $m$ and $d$, and recall that the number $N$ of points is $N = m^d$. To ease comparing the different results, we write all bounds with an explicit term $\sqrt{dm^d}$ (or $\sqrt{m^d}$ when $d$ is treated as a constant), which is the order of magnitude of the discrepancy of a uniform random point set. For constant $d$, Beck~\cite{Beck87} showed that the expected discrepancy of such a point set $P$ satisfies
\[
\E D^*(P) = O\left(\sqrt{m^d} \sqrt{\frac{\log m}{m}}\right).
\]

In the technical report~\cite{DoerrGS04} (also described in~\cite[(28)]{Gnewuch12}), an upper bound of 
\[
O\left(\sqrt{dm^d} \sqrt{\frac{\log N}{m/d}}\,\right) = O\left(\sqrt{dm^d} \sqrt{d\,\frac{\log m}{m/d}}\,\right)
\] 
was shown. This bound extends the result of Beck~\cite{Beck87} to non-constant $d$, but the dependence of the discrepancy on $d$ is weak due to use of the (as we know from~\cite{Gnewuch08}) non-optimal $\delta$-covers from~\cite{DoerrGS05} and the absence of Aistleitner's~\cite{Aistleitner11} dyadic chaining method.

A significant improvement of the dependence on $d$ and the first lower bound was presented by Pausinger and Steinerberger~\cite{PausingerS16}, who proved that for all $d \in \N$ and $m$ sufficiently large relative to $d$, 
\begin{equation}\label{eq:proven}
\frac 1 {10} \sqrt{d m^d} \sqrt{\frac 1 {m/d}} \le \E D^*(P) \le \sqrt{d m^d} \sqrt{\frac{\log m}{m/d}}.
\end{equation}
It is not stated in the paper for which values of $m$ and $d$ this bound is valid, that is, what ``$m$ sufficiently large compared to $d$'' precisely means, and the authors state it as an open problem to overcome this asymptotic requirement.  

Concerning the lower bound, Pausinger and Steinerberger conjecture that it is not tight and speculate that a lower bound of 
\begin{equation}\label{eq:conj}
\Omega\bigg(\sqrt{d m^d} \, \frac{1 + \sqrt{\log(m)/d}}{\sqrt{m/d}}\bigg)
\end{equation}
``might actually be very close to the truth.'' Note that this bound is asymptotically stronger than the previous lower bound only for $m$ superexponential in $d$, that is, $m = 2^{\omega(d)}$.

\subsection{Our Result: A Tight Discrepancy Estimate for Jittered Sampling}

We show that both the conjectured lower bound~\eqref{eq:conj}, and thus also the proven lower bound in~\eqref{eq:proven}, and the proven upper bound in~\eqref{eq:proven} are not tight, but that instead the true order of magnitude is \[\Theta\left(\sqrt{d m^d} \sqrt{\frac{1+\log(m/d)}{m/d}}\,\right)\] for all $m$ and $d$ such that $m \ge d$. 

Our result shows that it is the ratio of $m$ and $d$ that describes by how much better jittered sampling is compared to uniform random sampling. For $m = \Theta(d)$, the expected star discrepancies are asymptotically the same, namely $\Theta(\sqrt{dm^d}\,)$. When $m = \omega(d)$,  jittered sampling is superior, leading to discrepancies smaller by a factor of $\Theta\Big(\sqrt{\frac{\log(m/d)}{m/d}}\, \Big)$. Note that the upper bound of~\cite{PausingerS16} shows an advantage of jittered sampling only for $m = \omega(d \log d)$ and ``$m$ large enough compared to $d$''. 

In this article, we did not aim at making the leading constant or any lower order terms precise, though in principle we do not see any obstacles for obtaining reasonable absolute bounds for both the lower bound (mainly relying on an estimate on the maximum of independent binomial random variables) and the upper bound (mainly relying on Aistleitner's dyadic chaining method). 

Our result suggests (but we do not prove this) that for $m < d$, jittered sampling does not lead to discrepancies of asymptotic order smaller than those of independent random point sets. Since there is good reason to believe that jittered sampling nevertheless has some (small, but gratuitous) advantage, we discuss brief{}ly in Section~\ref{sec:small} how to use jittered sampling also for smaller number of points (including less than $2^d$) and what can be said about the discrepancy of such point sets.



\section{Notation and Preliminaries}

Throughout this work, we use the following notation: Given an $N$-point set $P \subseteq [0,1)^d$, we denote for each Lebesgue-measurable set $A$  by $\lambda(A)$ its Lebesgue measure and by  \[\disc(A) := \disc_P(A) := |P \cap A| - N \lambda(A)\] the \emph{signed non-normalized discrepancy} of the set $A$. 

The (non-normalized) \emph{star discrepancy} of the point set $P$ is $D^*(P) := \sup_B |\disc(B)|$, where $B$ runs over all \emph{axis-aligned rectangles} (boxes) with lower left corner in the origin, that is, all sets $[0,x) := \prod_{i=1}^d [0,x_i)$, $x \in [0,1)^d$.

For a positive integer $n$, we use $[n] := \{1, \dots, n\}$ as shorthand for the set of the first $n$ positive integers. We also write $[a..b] := \{z \in \Z \mid a \le z \le b\}$. 

For given $m$ and (usually suppressed) $d$, we call $G_m := \{0, \frac 1m, \dots, \frac{m-1}{m}\}^d$ the \emph{$m$-grid} (in dimension $d$). For $x \in G_m$, we call $C_x := [x,x+\frac 1m \mathbf{1}_m) = \prod_{i=1}^d [x_i,x_i+\frac 1m)$ an \emph{$m$-cube} (or \emph{cube}) in dimension $d$. A \emph{jittered sampling point set} of $N = m^d$ points is obtained by taking independently and uniformly at random one point from each $m$-cube. 

We now prove an elementary lower bound for tail probabilities of the binomial distribution in the special case $p=1/2$. Such estimates can be proven via normal approximations and the Berry-Esseen theorem, see, e.g.,~\cite[XVI.5]{Feller71}. To avoid such deep methods in this otherwise elementary combinatorial paper, we now show the following estimate. 

\begin{lemma}\label{lem:maxbin}
  Let $k \in \N$. Let $X_1, \dots, X_k$ be independent random variables each having a binomial distribution with parameters $n$ and $\frac 12$. We denote the maximum of these by $X_{\max} := \max\{X_i \mid i \in [k]\}$. If $c \in \R$ is such that 
	\[
	\alpha(c) := \sqrt{\frac{n (\ln k - \frac 12 \ln\ln k - c)}{2(1+\sqrt{2\ln(k)/n}\,)}}
	\] 
	is at least $\sqrt n$ and small enough to satisfy $\alpha(c) + \frac n {\alpha(c)} \le \frac n2$, then 
	\[
	\Pr[X_{\max} \ge \tfrac n2 + \alpha(c)] \ge 1 - \exp\bigg(-\frac 1 {1.5 e^{169/6} \sqrt {\pi}}  e^{c}\bigg).
	\]
Consequently, for $e^6 \le k \le e^{n/2}$, we have 
\[
\E[\max\{0,X_{\max}- \tfrac n2\}] \ge \sqrt{\frac{n (\ln k - \ln\ln k)}{2(1+\sqrt{2\ln(k)/n}\,)}} \bigg(1 - \exp\bigg(-\frac {\sqrt{\ln k}} {1.5 e^{169/6} \sqrt {\pi}}\bigg)\bigg).
\]
\end{lemma}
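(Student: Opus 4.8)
The plan is to prove the tail estimate first and then read off the expectation bound as an immediate corollary. Throughout write $\beta := \sqrt{2\ln(k)/n}$ for brevity.

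\emph{Reduction of the tail estimate.} First use independence: with $p := \Pr[X_1 \ge \tfrac n2 + \alpha(c)]$ one has $\Pr[X_{\max} < \tfrac n2 + \alpha(c)] = (1-p)^k \le e^{-kp}$, so it is enough to show $kp \ge \tfrac1{1.5\,e^{169/6}\sqrt\pi}\,e^c$. Next unwind the definition of $\alpha(c)$, which was arranged precisely so that $\tfrac{2\alpha(c)^2(1+\beta)}{n} = \ln k - \tfrac12\ln\ln k - c$, that is,
\[
\exp\!\Big(\!-\tfrac{2\,\alpha(c)^2\,(1+\beta)}{n}\Big) \;=\; \frac{\sqrt{\ln k}}{k}\,e^c .
\]
Hence the whole statement is equivalent to the single binomial anti-concentration bound
\[
\Pr\!\big[X_1 \ge \tfrac n2 + \alpha(c)\big] \;\ge\; \frac{1}{1.5\,e^{169/6}\sqrt\pi\,\sqrt{\ln k}}\;\exp\!\Big(\!-\tfrac{2\,\alpha(c)^2\,(1+\beta)}{n}\Big),
\]
and now the hypotheses have a clear purpose: $\alpha(c)\ge\sqrt n$ will make the unavoidable relative errors (all of order $O(1/\alpha(c))$) harmless, while $\alpha(c)+n/\alpha(c)\le\tfrac n2$ keeps $\alpha(c)$ safely below $\tfrac n2$, so that the factors $\tfrac1{1-2j/n}$ appearing in the next step stay bounded.

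\emph{The anti-concentration bound (the crux).} Abbreviate $\alpha := \alpha(c)$ and $L := \tfrac{2\alpha^2(1+\beta)}{n}$, and write the tail as $\Pr[X_1 \ge \tfrac n2 + \alpha] = \sum_{j \ge \alpha,\ \frac n2 + j \in \Z}\binom{n}{\frac n2 + j}2^{-n}$. I would bound this below by a block of consecutive terms starting just above $\tfrac n2 + \alpha$: a block of $\Theta(n/\alpha)$ terms in the main regime $\ln k = O(n)$, and a single term when $\ln k$ is very large relative to $n$ (the hypotheses guarantee the block stays inside $(\tfrac n2, n]$). For one term, an explicit two-sided form of Stirling's formula gives $\binom{n}{\lfloor n/2\rfloor}2^{-n} \ge c_0/\sqrt n$ for an absolute $c_0>0$, and writing the remaining factor as a telescoping product $\prod_{i=1}^{j}\tfrac{n/2-i+1}{n/2+i}$ and using the elementary estimates $\ln(1-x)\ge -x/(1-x)$ and $\ln(1+x)\le x$ one obtains
\[
\binom{n}{\tfrac n2 + j}2^{-n} \;\ge\; \frac{c_0}{\sqrt n}\,\exp\!\Big(\!-\tfrac{2j^2}{n}\,(1+\beta)\Big)
\]
for every index $j$ in the block. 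The decisive point — and the reason the hypotheses are phrased as they are — is that the excess of the true binomial exponent over $-\tfrac{2j^2}{n}$, which is only an $o(1)$ relative perturbation in the bulk but becomes a genuinely $\Theta(n)$-sized quantity once $\alpha$ is comparable to $\tfrac n2$, is in every case dominated by the slack $\tfrac{2j^2}{n}\beta$ that has been deliberately built into the exponent. Summing the block (each exponent exceeds $L$ by at most $O(1+\beta)$), one gets $\Pr[X_1 \ge \tfrac n2 + \alpha] \ge \tfrac{c_0}{\sqrt n}\cdot(\text{blocklength})\cdot e^{-O(1+\beta)}\,e^{-L}$, and $\tfrac{\text{blocklength}}{\sqrt n}\,e^{-O(1+\beta)} \ge \tfrac{c_1}{\sqrt{\ln k}}$ follows because the shape of $\alpha(c)$ keeps $\alpha^2 \le \tfrac n2\ln k$ (directly when $\ln k = O(n)$; for larger $\ln k$ one simply uses the single-term bound instead). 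The hard part is exactly this bookkeeping — tracking the constants from the Stirling estimate, the $e^{-O(1+\beta)}$ loss and the various lower-order terms — and it is what manufactures the ungainly constant $\tfrac1{1.5\,e^{169/6}\sqrt\pi}$; I do not expect any conceptual obstruction.

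\emph{The expectation bound.} Apply the tail estimate with the choice $c = \tfrac12\ln\ln k$. Then $\alpha(c) = \sqrt{\tfrac{n(\ln k - \ln\ln k)}{2(1+\beta)}}$, which is exactly the prefactor in the claim, and $e^c = \sqrt{\ln k}$, so the probability bound becomes $\Pr[X_{\max} \ge \tfrac n2 + \alpha(c)] \ge 1 - \exp(-\tfrac{\sqrt{\ln k}}{1.5\,e^{169/6}\sqrt\pi})$. One checks the two hypotheses for $e^6 \le k \le e^{n/2}$: from $k \le e^{n/2}$ we get $\beta \le 1$, hence $\alpha(c)^2 \ge \tfrac n4(\ln k - \ln\ln k) \ge n$ because $u - \ln u \ge 4$ for all $u \ge 6$; and $\alpha(c)^2 \le \tfrac n2\ln k \le \tfrac{n^2}{4}$ together with $n/\alpha(c) \le \sqrt n$ gives $\alpha(c) + n/\alpha(c) \le \tfrac n2$ after a short computation. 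Finally, on the event $\{X_{\max} \ge \tfrac n2 + \alpha(c)\}$ one has $\max\{0, X_{\max} - \tfrac n2\} \ge \alpha(c)$, so
\[
\E\big[\max\{0, X_{\max} - \tfrac n2\}\big] \;\ge\; \alpha(c)\cdot\Pr\big[X_{\max} \ge \tfrac n2 + \alpha(c)\big] \;\ge\; \alpha(c)\Big(1 - \exp\big(\!-\tfrac{\sqrt{\ln k}}{1.5\,e^{169/6}\sqrt\pi}\big)\Big),
\]
which is precisely the asserted inequality.
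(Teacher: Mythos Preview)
Your approach is essentially the paper's: lower-bound a single binomial point probability via Stirling, sum a block of $\lfloor n/\alpha\rfloor$ consecutive terms to bound the tail, apply $(1-p)^k\le e^{-kp}$, and then specialize to $c=\tfrac12\ln\ln k$ for the expectation. The only substantive difference is cosmetic --- you absorb the cubic error $4\alpha^3/n^2$ into the $\beta$-slack in the exponent from the outset, whereas the paper carries it explicitly and simplifies at the end; and you estimate $\binom{n}{n/2+j}/\binom{n}{n/2}$ by a telescoping product where the paper applies Stirling directly to $\binom{n}{n/2+j}$.

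One place where your sketch is too optimistic: the verification of $\alpha(c)+n/\alpha(c)\le \tfrac n2$ for the second claim. From $\alpha(c)^2\le \tfrac{n^2}{4}$ and $n/\alpha(c)\le\sqrt n$ you only get $\alpha(c)+n/\alpha(c)\le \tfrac n2+\sqrt n$, which overshoots. The paper instead uses the sharper bound $\alpha(c)\le n/\sqrt8$ (from $\tfrac{x}{1+\sqrt x}\le\tfrac12$ with $x=2\ln(k)/n\le1$), which together with monotonicity of $\alpha+n/\alpha$ on $[\sqrt n,\infty)$ gives $\alpha(c)+n/\alpha(c)\le n/\sqrt8+\sqrt8\le \tfrac n2$ for $n\ge20$; the remaining cases $12\le n<20$ are handled by a separate trivial estimate. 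Your ``short computation'' needs both of these refinements.
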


\begin{proof}
  Let $X$ be a random variable having a binomial distribution with parameters $n$ and $\frac 12$. Let $\alpha \ge 0$ be such that $\frac n2+\alpha \in \N$. We first give a lower bound for the probability that $X$ exceeds its expectation by exactly $\alpha$. Using Stirling's approximation in the version 
	\[
	\sqrt {2\pi} n^{n+\frac 12} e^{-n} e^{\frac 1 {12n+1}} < n! < \sqrt {2\pi} n^{n+\frac 12} e^{-n} e^{\frac 1 {12n}}
	\] 
	for all $n \ge 1$ due to Robbins~\cite{Robbins55} and the elementary estimate $1+x \le e^x$ valid for all $x \in \R$, we compute for $\alpha < \frac n2$ that
\begin{align*}
  \Pr[&X = \tfrac n2 + \alpha] = 2^{-n} \binom{n}{ \frac n2 + \alpha}\\
  & = 2^{-n} \frac{n!}{(\frac n2+\alpha)! \, (\frac n2 - \alpha)!}\\
  & \ge \frac 1 {\sqrt {2\pi}\sqrt n (1 + \frac{2\alpha}{n})^{n/2 + \alpha + 1/2} (1 - \frac{2\alpha}{n})^{n/2 - \alpha + 1/2}} \frac{\exp(\frac 1{12n+1})}{\exp(\frac 1{12(\frac n2 + \alpha)}) \exp(\frac 1{12(\frac n2 - \alpha)}) }\\
  & \ge \frac 1 {\sqrt {2\pi}} \frac 1 {\sqrt n (1 + \frac{2\alpha}{n})^{2\alpha} (1 - (\frac{2\alpha}{n})^2)^{n/2 - \alpha + 1/2}} \frac{1}{\exp(\frac 1{12}) \exp(\frac 1{12}) }\\
  & \ge \frac 1 {e^{1/6} \sqrt {2\pi}} \frac 1 {\sqrt n \exp((2\alpha)^2 / n) \exp(-(2\alpha/n)^2 (n/2 - \alpha + 1/2))}\\
  & \ge \frac 1 {e^{1/6} \sqrt {2\pi}} \frac 1 {\sqrt {n}} e^{-2\alpha^2 / n - 4 \alpha^3/n^2}.
\end{align*}
Note that the lower bound in the last line is also valid for $\alpha = \frac n2$, simply because it is less than $2^{-n}$.
  
  Let $\alpha \ge \sqrt n$ with $\alpha + \frac n \alpha \le \frac n2$. We now estimate the probability that $X$ exceeds its expectation by at least $\alpha$ via the probability that $X \in [\E X + \alpha, \E X + \alpha + \frac n\alpha]$. Using that $\binom{n}{\frac n2+\alpha}$ and thus $\Pr[X = \frac n2+\alpha]$ is decreasing in $\alpha$ for integral $\alpha \ge 0$, we compute
\begin{align}
  \Pr[X \ge \tfrac n2 + \alpha] & \ge \Pr[\tfrac n2 + \alpha \le X \le \tfrac n2 + \alpha + \tfrac n\alpha]\nonumber\\
  & \ge \lfloor \tfrac n\alpha \rfloor  \Pr[X = \lfloor \tfrac n2 + \alpha + \tfrac n\alpha \rfloor]\nonumber\\
  & \ge \frac 1 {e^{1/6} \sqrt {2\pi}} \frac{\lfloor \tfrac n\alpha \rfloor}{\sqrt n} e^{-2(\alpha+n/\alpha)^2 / n- 4 (\alpha+n/\alpha)^3/n^2}\label{eq:bino}\\ 
  & \ge \frac 1 {1.5 e^{169/6} \sqrt {2\pi}} \frac {\sqrt {n}}{\alpha} e^{-2\alpha^2 / n - 4\alpha^3/n^2}, \nonumber
\end{align}
where the last inequality uses $\alpha \ge \sqrt n$, $\alpha \le \frac n2$, and $\alpha \ge 1$, which give the estimates $2(\alpha+n/\alpha)^2 / n = 2 \frac{\alpha^2}{n} + 4 + 2 \frac n {\alpha^2} \le 2 \frac{\alpha^2}{n} + 6$ and $4 (\alpha+n/\alpha)^3/n^2 = 4 \frac{\alpha^3}{n^2} + 12 \frac \alpha n + 12 \frac 1\alpha + 4 \frac n{\alpha^3} \le 4 \frac{\alpha^3}{n^2} + 6 + 12 + \frac 4\alpha \le 4 \frac{\alpha^3}{n^2} + 22$.

Consequently, the probability that all of $X_1, \dots, X_k$ are below $\frac n2 + \alpha$ is
\begin{align*}
  \Pr[X_{\max} < \tfrac n2+\alpha] & = \Pr[\forall i \in [k] : X_i < \tfrac n2 + \alpha]\\
  & \le \bigg(1 - \frac 1 {1.5 e^{169/6} \sqrt {2\pi}} \frac {\sqrt {n}}{\alpha} e^{-2\alpha^2 / n - 4\alpha^3/n^2}\bigg)^k\\
  & \le \exp\bigg(-\frac k {1.5 e^{169/6} \sqrt {2\pi}} \frac {\sqrt {n}}{\alpha} e^{-2\alpha^2 / n - 4\alpha^3/n^2}\bigg).
\end{align*}

Let $c \in \R$ be such that 
\begin{align*}
\alpha = \alpha(c) = \sqrt{\frac{\tfrac 12 n (\ln k - \tfrac 12 \ln\ln k - c)}{1+\sqrt{2\ln(k)/n}}}
\end{align*}
satisfies $\alpha \ge \sqrt n$ and $\alpha + \frac n \alpha \le \frac n2$. With $\alpha \le \sqrt{\frac 12 n \ln k }$, we continue the previous estimate and compute
\begin{align*}
  \exp&\bigg(-\frac k {1.5 e^{169/6} \sqrt {2\pi}} \frac {\sqrt {n}}{\alpha} e^{-(2\alpha^2 / n)(1+2\alpha/n)}\bigg)\\
  & \le \exp\bigg(-\frac k {1.5 e^{169/6} \sqrt {2\pi}} \frac {\sqrt {n}}{\sqrt{\frac 12 n \ln k}} \exp\bigg(-\frac{(\ln k - \tfrac 12 \ln\ln k - c)(1+2\alpha/n)}{1+\sqrt{2\ln(k)/n}}\bigg)\bigg)\\
  & \le \exp\bigg(-\frac k {1.5 e^{169/6} \sqrt {\pi}} \frac {1}{\sqrt{\ln k}} \exp(-(\ln k - \tfrac 12 \ln\ln k - c))\bigg)\\
  & = \exp\bigg(-\frac 1 {1.5 e^{169/6} \sqrt {\pi}}  e^{c}\bigg) .
\end{align*}

To prove the second claim, where $e^6 \le k \le e^{n/2}$, let $c_k = \tfrac 12 \ln\ln k$ and 
\[
\alpha = \alpha_k = \alpha(c_k) = \sqrt{\frac{\frac 12 n (\ln k - \ln\ln k)}{1 + \sqrt{2 \ln(k)/n}}}.
\]
We note that for $k \in [e^6,e^{n/2}]$, we have $\alpha_k \ge \sqrt{\frac{\frac 12 n (\ln e^6 - \ln\ln e^6)}{1 + \sqrt{2 \ln(e^{n/2})/n}}} \ge \sqrt n$; here we used that $k \mapsto \ln k - \ln\ln k$ is increasing for $k \ge e$. For $\alpha \ge \sqrt n$, the expression $\alpha + \frac n\alpha$ is increasing in $\alpha$. Hence noting that $\alpha_k \le \sqrt{\frac{\frac 12 n \ln k}{1 + \sqrt{2 \ln(k)/n}}} = \frac{n}{2} \sqrt{\frac{2\ln(k)/n}{1+\sqrt{2 \ln(k)/n}}} \le \frac{n}{2} \sqrt{\frac{2\ln(e^{n/2})/n}{1+\sqrt{2 \ln(e^{n/2})/n}}} \le \frac{n}{\sqrt 8}$ when $k \le e^{n/2}$ -- here we used the fact that $x \mapsto \frac{x}{1+\sqrt x}$ is increasing in $\R_{\ge 0}$ --, we estimate $\alpha_k + \frac n{\alpha_k} \le \frac{n}{\sqrt 8} + \frac n {n / \sqrt 8} \le \frac n{\sqrt 8} + \sqrt 8$, which is at most $\frac n2$ when $n \ge 20$. Hence for $n\ge 20$ and any $k \in [e^6,e^{n/2}]$, we can use the first claim of this lemma and compute 
\begin{align*}
\E[\max\{0,X_{\max} - \tfrac n2\}] &\ge \alpha_k \cdot \Pr[X_{\max} \ge \tfrac n2 + \alpha_k] \\
&\ge \sqrt{\frac{n (\ln k - \ln\ln k)}{2(1+\sqrt{2\ln(k)/n}\,)}} \bigg(1 - \exp\bigg(-\frac {\sqrt{\ln k}} {1.5 e^{169/6} \sqrt {\pi}}\bigg)\bigg).
\end{align*}
For $n < 20$, the second claim is trivially fulfilled as the following two estimates show (where the latter again uses $e^x \ge 1 + x$, valid for all $x \in \R$).
\begin{align*}
&E[\max\{0,X_{\max} - \tfrac n2\}] \ge E[\max\{0,X_1 - \tfrac n2\}] \ge \tfrac n2 \Pr[X_1 = n] = 2^{-n} \tfrac n2 \ge 2^{-19} \tfrac n2.\\
&\sqrt{\frac{n (\ln k - \ln\ln k)}{2(1+\sqrt{2\ln(k)/n}\,)}} \bigg(1 - \exp\bigg(-\frac {\sqrt{\ln k}} {1.5 e^{169/6} \sqrt {\pi}}\bigg)\bigg) \\
& \quad \le \sqrt{\frac{n \ln k}{2}} \bigg(1 - 1 - \bigg(\frac {\sqrt{\ln k}} {1.5 e^{169/6} \sqrt {\pi}}\bigg)\bigg)  \\
& \quad \le \sqrt{\tfrac 12 n \ln e^{n/2}} \frac{\sqrt{\ln e^{n/2}}}{1.5 e^{169/6} \sqrt {\pi}} < \tfrac n2 \sqrt 8 e^{-28} < 2^{-19} \tfrac n2.
\end{align*}
\end{proof}

\section{Proof of the Lower Bound}

In this section, we prove that for all $m \ge d$ the discrepancy of a jittered sampling point set having $m^d$ point in $[0,1)^d$ is at least of order $\sqrt{dm^d} \sqrt{\frac{1+\log(m/d)}{m/d}}$. 
\begin{theorem}\label{thm:lower}
  There is a constant $C>0$ such that for all $m, d \in \N_{\ge 2}$ with $m \ge d$, the expected discrepancy of a jittered sampling point set $P \subset [0,1)^d$, $|P|=m^d$, is at least \[\E D^*(P) \ge C \sqrt{dm^d} \sqrt{\frac{1+\log(m/d)}{m/d}}.\]
\end{theorem}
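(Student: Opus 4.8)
The plan is to exhibit, for $d$ above a suitable absolute constant, an explicit family $\mathcal F$ of $e^{\Omega(d(1+\log(m/d)))}$ anchored boxes on which $\max_{B\in\mathcal F}|\disc(B)|$ is typically of order $dm^{(d-1)/2}\sqrt{1+\log(m/d)}$, and to reduce this to a maximum of \emph{independent} binomials controlled by Lemma~\ref{lem:maxbin}. For $\vec b=(b_1,\dots,b_d)$ ranging over a suitable box $\prod_i\{m-r,\dots,m-1\}$ of grid offsets with $r=\Theta(\max\{1,m/d\})$ (a minor case distinction on whether $m/d$ is large is needed so that the family is big enough yet the binomial sizes below stay within a constant factor), take the box $B_{\vec b}=\prod_{i=1}^d[0,(b_i+\tfrac12)/m)$, whose upper corner cuts every relevant $m$-cube exactly at its midpoint. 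Writing the point in the cube with lower corner $c/m$ as $p_c$ and $V_{c,i}:=\eins[m(p_c)_i-c_i<\tfrac12]$ (these are independent fair coins over all $(c,i)$), a short computation gives $\disc(B_{\vec b})=\sum_{c}\big(\prod_{i:c_i=b_i}V_{c,i}-2^{-|\{i:c_i=b_i\}|}\big)$, the sum over cubes $c$ with $c_i\le b_i$ for all $i$ and $c_j=b_j$ for some $j$; in particular $\E\,\disc(B_{\vec b})=0$. Isolating the first–order part $M(\vec b):=\sum_{j=1}^d\sum_{c:\,c_j=b_j,\,c_i<b_i\,\forall i\ne j}(V_{c,j}-\tfrac12)$ shows that $M(\vec b)$ is a centred $\mathrm{Bin}(n_{\vec b},\tfrac12)$ with $n_{\vec b}=\sum_j\prod_{i\ne j}b_i=\Theta(dm^{d-1})$ (the restriction $b_i\ge m-r$ keeps the implied constants absolute, using $(1-1/d)^{d-1}\ge 1/e$), while the remaining part has variance $O(d^2m^{d-2})$ and is supported on coins disjoint from those of $M(\vec b)$. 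Thus each $\disc(B_{\vec b})$ behaves like a centred binomial of order $dm^{d-1}$; a single box already forces $\E D^*(P)=\Omega(\sqrt{dm^{d-1}})$, and the point of the large family is to gain the extra factor $\sqrt{d(1+\log(m/d))}$.

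The crux — and the step I expect to be the main obstacle — is to pass from this maximum over a correlated family to a maximum over an independent one. The boxes are genuinely dependent: $B_{\vec b}$ and $B_{\vec b'}$ share all cubes below $\vec b\wedge\vec b'$, so their discrepancies are \emph{positively} correlated, which by itself works against the maximum being large; moreover exact independence of all the $\disc(B_{\vec b})$ is outright impossible, since there are too few coins. So one must exploit the precise combinatorial structure. A natural route is to condition on the coins feeding the higher–order parts (and on a bounded further amount of randomness) so that, conditionally, the first–order parts $M(\vec b)$ — or their truncations to a common length $n\asymp dm^{d-1}$ — stochastically dominate, in the sense needed for the expected maximum, a family of $|\mathcal F|$ independent centred $\mathrm{Bin}(n,\tfrac12)$ variables; an alternative is to thin $\mathcal F$ to a subfamily on which the first–order supports $\{(c,j):c_j=b_j,\ c_i<b_i\ \forall i\ne j\}$ are pairwise disjoint and to compensate for the loss in the index set. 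Obtaining clean (conditional) independence while keeping both the variance order $dm^{d-1}$ and an index set of size $e^{\Omega(d(1+\log(m/d)))}$ is where the real work lies.

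Granting such a reduction, the remainder is bookkeeping. With $k=|\mathcal F|=e^{\Omega(d(1+\log(m/d)))}$ and $n\asymp dm^{d-1}$, the hypotheses $e^6\le k\le e^{n/2}$ of Lemma~\ref{lem:maxbin} hold for all $m\ge d$ once $d$ exceeds an absolute constant (for $k\le e^{n/2}$ one uses $m^{d-1}\ge 2(1+\log m)$), and $\ln k=\Theta(d(1+\log(m/d)))$ (here $m/d\ge1$ is used), so the lemma gives $\E\,\max_{\vec b\in\mathcal F}|\disc(B_{\vec b})|\gtrsim\sqrt{n\ln k}\gtrsim\sqrt{d^2m^{d-1}(1+\log(m/d))}=dm^{(d-1)/2}\sqrt{1+\log(m/d)}$, which equals $\sqrt{dm^d}\,\sqrt{(1+\log(m/d))/(m/d)}$, as claimed. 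Finally, the finitely many dimensions $d$ below the absolute constant (for all $m\ge d$) are handled more cheaply: there one uses the $m$ boxes $[0,1)^{d-1}\times[0,(b+\tfrac12)/m)$, $b\in[0..m-1]$, whose discrepancies are $m$ \emph{genuinely} independent centred $\mathrm{Bin}(m^{d-1},\tfrac12)$ (distinct slabs involve disjoint points), so Lemma~\ref{lem:maxbin} with $k=m$, $n=m^{d-1}$ yields $\E D^*(P)=\Omega(m^{(d-1)/2}\sqrt{\log m})$ for $m\ge e^6$, and the remaining finitely many pairs $(d,m)$ are absorbed by the trivial bound $D^*(P)\ge\tfrac12$; since $d$ is now bounded, $dm^{(d-1)/2}\sqrt{1+\log(m/d)}$ is within an absolute constant factor of these estimates, so a small enough choice of $C$ completes the proof.
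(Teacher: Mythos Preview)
Your proposal has a genuine gap, and you have correctly identified it yourself: the reduction from the correlated family $\{\disc(B_{\vec b})\}_{\vec b}$ to a family of independent binomials is not carried out, and neither of your two suggested routes works as stated. Thinning to pairwise disjoint first-order supports cannot preserve an index set of size $e^{\Omega(d(1+\log(m/d)))}$, because each support has $\Theta(dm^{d-1})$ coins and there are only $m^d$ coins in total, so at most $O(m/d)$ disjoint supports exist; with $k\asymp m/d$ and $n\asymp dm^{d-1}$ you get only $\sqrt{dm^{d-1}\log(m/d)}$, a factor $\sqrt d$ short. The conditioning route is equally problematic: the $M(\vec b)$ share coins with one another (not just with the higher-order parts), so conditioning on the higher-order coins does not decouple them, and since the remaining correlations are positive they push the maximum \emph{down}, not up---so there is no obvious stochastic domination in the direction you need.

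The paper bypasses this difficulty entirely by a different decomposition. Rather than looking at a single large correlated family of boxes, it fixes $r=(m-k)/m$ with $k=\lfloor m/d\rfloor$ and, for each dimension $i$ separately, considers the slabs $R_i=[0,r)^{i-1}\times[r,S_i)\times[0,r)^{d-i}$. Within dimension $i$ one finds $k$ half-slabs $T_j=[r+j/m,\,r+j/m+1/(2m))\times[0,r)^{d-1}$ that touch disjoint collections of cubes, so their discrepancies are \emph{genuinely} independent centred $\mathrm{Bin}((m-k)^{d-1},\tfrac12)$ variables, and Lemma~\ref{lem:maxbin} gives $\E\max_j\disc(T_j)=\Omega(\sqrt{m^{d-1}\log(m/d)})$. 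The factor $d$ then comes not from a larger index set but from a ``construction principle'' (Lemma~\ref{lem:construct}): if $B$ is the smallest anchored box containing all the $R_i$, then $\E\disc(B)=\sum_{i=1}^d\E\disc(R_i)=d\cdot\E\disc(R_1)$, because the remainder $B\setminus\bigcup_i R_i$ intersects only cubes whose points are still unconditioned and hence has expected discrepancy zero (Lemma~\ref{lem:discnull}). Thus the paper trades your single max over $(m/d)^d$ correlated boxes for $d$ independent one-dimensional maxima over $m/d$ independent slabs each---the same target $d\sqrt{m^{d-1}\log(m/d)}$, but with independence built in from the start. Your treatment of the bounded-$d$ regime is essentially the paper's argument restricted to a single dimension, which is why it works cleanly there.
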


To ease the presentation, we treat the ``small'' case that $m$ is at most a constant factor larger than $d$ separately in Lemma~\ref{lem:smallm}. The more interesting case, naturally, is that $m$ is of larger order than $d$. For this, we prove the following result, which we state in a non-asymptotic fashion, noting again that in this work we did not optimize the leading constant or the lower order terms. 

\begin{lemma}\label{lem:lowermain}
  Let $m, d \in \N_{\ge 2}$ with $\lfloor \frac md \rfloor \ge e^6$, and $N = m^d$. Let $P$ be a random $N$-point set in $[0,1)^d$ obtained from jittered sampling. Then 
  \begin{align*}
  \E D^*(P) &\ge (2e)^{-\frac 12} d m^{\frac{d-1}2} \sqrt{\ln(\lfloor\tfrac md \rfloor) - \ln\ln(\lfloor \tfrac md \rfloor)} \\ 
  &\quad  \left(1+\sqrt{\frac{2\ln(\lfloor \tfrac md \rfloor)}{(m - \lfloor \tfrac md \rfloor)^{d-1}}}\,\right)^{-\frac 12} \left(1 - \exp\left(-\frac {\sqrt{\ln \lfloor \tfrac md \rfloor}} {1.5 e^{169/6} \sqrt {\pi}}\,\right)\right).
  \end{align*}
  In particular, there is a $C > 0$ such that for all $m, d \in \N_{\ge 2}$ with $\lfloor \frac md \rfloor \ge e^6$ a jittered sampling point set $P$ with $N = m^d$ points satisfies $\E D^*(P) \ge C \sqrt{dm^d} \sqrt{\frac{1+\log(m/d)}{m/d}}$.
\end{lemma}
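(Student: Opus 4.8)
The plan is to construct, for each realization of the jittered point set $P$, a single anchored box $B^*=[0,z^*)$ — adapted to $P$ — whose discrepancy decomposes into $d$ independent one‑dimensional pieces, each handled by Lemma~\ref{lem:maxbin}, plus a remainder of mean zero; the factor $d$ then comes from summing the $d$ coordinate directions with the box chosen so the pieces reinforce rather than cancel, and the factor $\sqrt{\log(m/d)}$ from letting each direction choose the best of $\approx m/d$ candidate grid layers. Concretely, set $q:=\lfloor m/d\rfloor$ and $n:=(m-q)^{d-1}$; the hypothesis $q\ge e^6$ together with $m\ge d\ge2$ (which gives $m-q\ge m/2$) ensures $n\ge2\ln q$, i.e.\ $q\le e^{n/2}$, so Lemma~\ref{lem:maxbin} applies with $k=q$. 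Index the $m$-cubes by $\mathbf j\in[0..m-1]^d$, let $X_{\mathbf j}$ be the point placed in $C_{\mathbf j}$, and for $i\in[d]$, $a\in[m-q..m-1]$ put $\mathcal D_{i,a}:=\{\mathbf j:j_i=a,\ j_{i'}\le m-q-1\text{ for all }i'\ne i\}$ and $X_{i,a}:=|\{\mathbf j\in\mathcal D_{i,a}:(X_{\mathbf j})_i<(a+\tfrac12)/m\}|$. Since a cube in $\mathcal D_{i,a}$ has exactly one coordinate that is $\ge m-q$ (namely $j_i=a$), the sets $\mathcal D_{i,a}$ are pairwise disjoint, $|\mathcal D_{i,a}|=n$, and the $X_{i,a}$ are independent, each $\mathrm{Bin}(n,\tfrac12)$. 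With $\rho_i:=\max\{0,\max_{a}(X_{i,a}-\tfrac n2)\}$, Lemma~\ref{lem:maxbin} (second claim) gives $\E\rho_i\ge\sqrt{\frac{n(\ln q-\ln\ln q)}{2(1+\sqrt{2\ln q/n})}}\bigl(1-\exp(-\tfrac{\sqrt{\ln q}}{1.5e^{169/6}\sqrt\pi})\bigr)$.

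Next I would build the box. Let $S:=\{i:\max_a(X_{i,a}-\tfrac n2)>0\}$; for $i\in S$ let $a_i^*$ be the least maximizer of $X_{i,a}$ and set $z_i^*:=(a_i^*+\tfrac12)/m$, while for $i\notin S$ set $z_i^*:=1$, and let $B^*:=\prod_i[0,z_i^*)$. A cube $C_{\mathbf j}$ meets $B^*$ exactly when $j_i\le a_i^*$ for all $i\in S$, and for such a cube $m^d\lambda(C_{\mathbf j}\cap B^*)=2^{-|I(\mathbf j)|}$ and $\eins[X_{\mathbf j}\in B^*]=\prod_{i\in I(\mathbf j)}\eins[(X_{\mathbf j})_i<(a_i^*+\tfrac12)/m]$, where $I(\mathbf j):=\{i\in S:j_i=a_i^*\}$. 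Discarding fully contained cubes (contribution $0$) and grouping the rest by $I(\mathbf j)$,
\[
\disc(B^*)=\sum_{\emptyset\ne I\subseteq S}\ \sum_{\substack{j_i=a_i^*\ (i\in I)\\ j_i\le a_i^*-1\ (i\in S\setminus I)\\ j_i\in[0..m-1]\ (i\notin S)}}\Bigl(\textstyle\prod_{i\in I}\eins[(X_{\mathbf j})_i<(a_i^*+\tfrac12)/m]-2^{-|I|}\Bigr).
\]
For a singleton $I=\{i\}$ (so $i\in S$) the inner index set contains $\mathcal D_{i,a_i^*}$, because $a_{i'}^*-1\ge m-q-1$ for $i'\in S\setminus\{i\}$; hence that term equals $(X_{i,a_i^*}-\tfrac n2)+\eta_i=\rho_i+\eta_i$, where $X_{i,a_i^*}-\tfrac n2=\rho_i$ by the choice of $S$ and $a_i^*$, and $\eta_i$ is the contribution of the remaining cubes of the inner sum. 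For $i\notin S$ there is no singleton term and $\rho_i=0$. Thus $\disc(B^*)=\sum_{i\in[d]}\rho_i+W$, with $W$ collecting the $\eta_i$ and all $|I|\ge2$ terms.

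To finish, note that every cube occurring in $W$ has at least two coordinates that are $\ge m-q$, hence lies outside $\bigcup_{i,a}\mathcal D_{i,a}$. Therefore $S$, the $a_i^*$, and the $\rho_i$ — which are functions of the coordinates $\{(X_{\mathbf j})_i:\mathbf j\in\mathcal D_{i,a}\}$ only — are determined by points in cubes disjoint from those appearing in $W$, and conditionally on that information each summand of $W$ is a centered function of a fresh uniform point in its own cube; so $\E W=0$. Since $D^*(P)\ge\disc(B^*)$ pointwise, $\E D^*(P)\ge\sum_i\E\rho_i+\E W=d\,\E\rho_1$, and substituting the Lemma~\ref{lem:maxbin} bound and using $n=(m-q)^{d-1}\ge(m-m/d)^{d-1}=m^{d-1}(1-\tfrac1d)^{d-1}\ge m^{d-1}/e$ yields precisely the displayed estimate. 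The ``in particular'' clause follows since $q=\lfloor m/d\rfloor\ge e^6$ forces $\ln q-\ln\ln q\ge c_1(1+\log(m/d))$, $(1+\sqrt{2\ln q/n})^{-1/2}\ge1/\sqrt2$, and $1-\exp(-\sqrt{\ln q}/(1.5e^{169/6}\sqrt\pi))\ge c_2>0$ for absolute $c_1,c_2>0$.

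I expect the main obstacle to be arranging that $\disc(B^*)$ splits \emph{exactly} as $\sum_i\rho_i+W$ with $\E W=0$; this is what forces each design choice — reserving $q$ grid layers per coordinate (to keep the layer-tests $\mathcal D_{i,a}$ disjoint and of common size $n$), cutting at cube midpoints (to make each layer-test an honest $\mathrm{Bin}(n,\tfrac12)$), and, crucially, opting out with $z_i^*=1$ on coordinates where no layer exceeds its mean. Without the opt-out one controls only $\E[\max_a(X_{i,a}-\tfrac n2)]=\E[X_{\max}-\tfrac n2]$ rather than its positive part, and the negative tail of $X_{\max}-\tfrac n2$ can dwarf the (intentionally non-optimized, hence tiny) constant produced by Lemma~\ref{lem:maxbin}.
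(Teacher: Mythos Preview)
Your proof is correct and follows essentially the same route as the paper: both reserve $q=\lfloor m/d\rfloor$ outer grid layers per coordinate, test half-cube slabs to obtain $q$ independent $\mathrm{Bin}((m-q)^{d-1},\tfrac12)$ deviations per direction, pick the best layer via Lemma~\ref{lem:maxbin}, and show the remaining ``corner'' contribution to the chosen box has conditional mean zero. The only cosmetic difference is that the paper isolates this last step as a separate construction lemma (Lemma~\ref{lem:construct}), whereas you carry out the conditioning and cube-by-cube decomposition of $\disc(B^*)$ inline.
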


While we did not optimize for the leading constant, our result can be written as $\E D^*(P) \ge (1-f(\frac md)) (2e)^{-\frac 12} \sqrt{dm^d} \sqrt{\frac{\ln(m/d)}{m/d}}$, where $f$ is a function tending to zero when the argument tends to infinity. Consequently, our leading constant of $(2e)^{-\frac12} \ge 0.4288$ is not too bad. We note that Pausinger and Steinerberger~\cite{PausingerS16} state a constant of $1/10$ in their theorem. An inspection of their proof shows that they actually prove their result with a leading constant of $0.5 \sqrt{\pi/2} \ln(2) = 0.4343...$.

In the proof of Lemma~\ref{lem:lowermain}, we use the elementary observation that any measurable set has expected signed discrepancy zero when all cubes intersecting it contain exactly one random point distributed uniformly in the cube. 

\begin{lemma}\label{lem:discnull}
  Let $A \subseteq [0,1)^d$ be a measurable set. Let $P$ be a set of $N = m^d$ random points such that each $m$-cube having non-empty intersection with $A$ contains exactly one point of $P$ and this point is uniformly distributed in this cube. Then $\E \disc(A) = 0$.
\end{lemma}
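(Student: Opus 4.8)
The plan is to unpack the definition $\disc(A) = |P \cap A| - N\lambda(A)$ and to evaluate $\E|P \cap A|$ by decomposing $A$ along the partition of $[0,1)^d$ into the $m$-cubes $C_x$, $x \in G_m$. Since these cubes are pairwise disjoint with union $[0,1)^d$, we have $\lambda(A) = \sum_{x \in G_m} \lambda(A \cap C_x)$ and $|P \cap A| = \sum_{x \in G_m} |P \cap C_x \cap A|$, and likewise $N\lambda(A) = m^d \sum_{x\in G_m}\lambda(A\cap C_x)$ because $N = m^d$.

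First I would observe that a cube $C_x$ with $C_x \cap A = \emptyset$ contributes nothing: there $\lambda(A \cap C_x) = 0$, and also $P \cap C_x \cap A \subseteq C_x \cap A = \emptyset$, so such cubes drop out of both sums regardless of how many points of $P$ they happen to contain. Hence both sums reduce to a sum over the cubes meeting $A$, and for each such cube the hypothesis guarantees $P \cap C_x = \{p_x\}$ for a single point $p_x$ uniformly distributed in $C_x$. For these cubes, $\Pr[p_x \in A] = \lambda(A \cap C_x)/\lambda(C_x) = m^d\,\lambda(A \cap C_x)$, using $\lambda(C_x) = m^{-d}$ and uniformity. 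Taking expectations and summing gives $\E|P \cap A| = \sum_{x : C_x \cap A \neq \emptyset} \Pr[p_x \in A] = m^d \sum_{x : C_x \cap A \neq \emptyset} \lambda(A \cap C_x) = m^d \lambda(A) = N\lambda(A)$, so $\E\disc(A) = \E|P\cap A| - N\lambda(A) = 0$.

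There is no genuine obstacle here; the proof is a one-line computation once the decomposition along the cube partition is written down. The only point that needs a moment of care is the remark that points of $P$ lying in cubes disjoint from $A$ never lie in $A$ and so are irrelevant — this is what makes the hypothesis (which controls only the cubes meeting $A$) sufficient for the conclusion.
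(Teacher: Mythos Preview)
Your proof is correct and follows essentially the same approach as the paper: both decompose along the partition into $m$-cubes, discard the cubes disjoint from $A$, and for each remaining cube use that the single uniform point lands in $A$ with probability $\lambda(A\cap C_x)/\lambda(C_x)=N\lambda(A\cap C_x)$. The paper phrases this as showing $\E\disc(A\cap Q)=0$ cube by cube, while you compute $\E|P\cap A|$ directly, but the computation is identical.
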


\begin{proof}
  Let $\QQ$ be the set of all cubes and $\QQ_A$ the set of cubes having non-empty intersection with $A$. Then
  \begin{align*}
    \disc(A) &= \sum_{Q \in \QQ} \disc(A \cap Q) = \sum_{Q \in \QQ_A} \disc(A \cap Q).
  \end{align*}
  Let $Q \in \QQ_A$. With probability $\lambda(A \cap Q) / \lambda(Q)$, the random point in $Q$ lies also in $A$ and we have $\disc(A \cap Q) = 1 - N \lambda(A \cap Q)$. Otherwise, $P \cap (A \cap Q)$ is empty, giving $\disc(A \cap Q)  = - N \lambda(A \cap Q)$. Consequently, 
  \begin{align*}
  \E \disc(A \cap Q) &= \frac{\lambda(A \cap Q)}{\lambda(Q)} (1 - N \lambda(A \cap Q)) - \bigg(1 - \frac{\lambda(A \cap Q)}{\lambda(Q)}\bigg) N \lambda(A \cap Q) \\
  &= \frac{\lambda(A \cap Q)}{\lambda(Q)} - N \lambda(A \cap Q) = 0,
  \end{align*}
  where the last equality follows from $\lambda(Q) = 1/N$. By linearity of expectation, $\E \disc(A) = \sum_{Q \in \QQ_A} \E \disc(A \cap Q) = 0$.
\end{proof}
 
We shall use the above observation to combine certain rectangles with known signed discrepancy into an anchored box with expected discrepancy signed equal to the sum of the discrepancies of these rectangles. 

\begin{lemma}\label{lem:construct}
  Let $r_1, \dots, r_d \in [0,1)$ be integer multiples of $1/m$. Let $B_0 := [0,r)$, where $r = (r_1,\dots,r_d)$. Let $P$ be a random set of $m^d$ points in $[0,1)^d$ obtained from jittered sampling. For each $i \in [d]$, let $S_i = S_i(P)$ be a random variable taking values in $[r_i,1)$ and let $R_i = \prod_{j=1}^{i-1} [0,r_j) \times [r_i,S_i) \times \prod_{j=i+1}^d [0,r_j)$. Assume that each $R_i$, i.e., the number~$S_i$, is independent of the position of all points outside $\bar R_i :=  \prod_{j=1}^{i-1} [0,r_j) \times [r_i,1) \times \prod_{j=i+1}^d [0,r_j)$. Let $B = \prod_{i=1}^d [0,S_i)$ be the smallest anchored box containing the $R_i$. Then \[\E D^*(P) \ge \E \disc(B) = \sum_{i=1}^d \E \disc(R_i).\]
\end{lemma}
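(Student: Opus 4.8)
The first inequality is essentially free: since $S_i \in [r_i,1) \subseteq [0,1)$ for every $i$, the set $B = [0,S)$ with $S := (S_1,\dots,S_d)$ is an anchored box, so $D^*(P) \ge \disc(B)$ holds for every realization of $P$, and taking expectations gives $\E D^*(P) \ge \E\disc(B)$. The real content of the lemma is thus the identity $\E\disc(B) = \sum_{i=1}^d \E\disc(R_i)$, and the plan is to obtain it by splitting $B$ into disjoint rectangles whose expected discrepancies can be controlled.

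For the decomposition I would use the standard telescoping one: for $i \in [0..d]$ set $B^{(i)} := \prod_{j=1}^i [0,S_j) \times \prod_{j=i+1}^d [0,r_j)$, so that $B^{(0)} = B_0$, $B^{(d)} = B$, and $B^{(0)} \subseteq B^{(1)} \subseteq \dots \subseteq B^{(d)}$ because $S_j \ge r_j$. Writing $T_i := B^{(i)} \setminus B^{(i-1)} = \prod_{j=1}^{i-1}[0,S_j) \times [r_i,S_i) \times \prod_{j=i+1}^d[0,r_j)$, one gets the disjoint union $B = B_0 \sqcup T_1 \sqcup \dots \sqcup T_d$, whence $\disc(B) = \disc(B_0) + \sum_{i=1}^d \disc(T_i)$ by additivity of $\disc$ over disjoint sets. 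Here $\disc(B_0) = 0$ surely: since each $r_j$ is a multiple of $1/m$, the box $B_0$ is a union of entire $m$-cubes, each containing exactly one point of $P$, so $|P \cap B_0| = m^d\lambda(B_0)$. It then remains to prove $\E\disc(T_i) = \E\disc(R_i)$ for each $i$, that is, since $R_i \subseteq T_i$, that $\E\disc(T_i \setminus R_i) = 0$, where $T_i \setminus R_i = \big(\prod_{j=1}^{i-1}[0,S_j) \setminus \prod_{j=1}^{i-1}[0,r_j)\big) \times [r_i,S_i) \times \prod_{j=i+1}^d[0,r_j)$ is the part of $T_i$ in which at least one coordinate $j < i$ lies in $[r_j,S_j)$.

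To treat $T_i \setminus R_i$ I would condition on $\mathcal F_i := \sigma(S_1,\dots,S_i)$, on which $T_i \setminus R_i$ is a deterministic set, and apply Lemma~\ref{lem:discnull} to the conditional distribution of $P$. What needs checking is that, conditionally on $\mathcal F_i$, every $m$-cube $C$ meeting $T_i \setminus R_i$ still contains exactly one point of $P$, uniformly distributed in $C$. The ``exactly one point'' part is automatic in jittered sampling. For the uniformity it is enough to show that such a cube $C$ is disjoint from each of $\bar R_1,\dots,\bar R_i$: then the point of $P$ in $C$ is independent of all points lying in $\bar R_1 \cup \dots \cup \bar R_i$ (distinct cubes are independent, and each $\bar R_l$ is a union of whole $m$-cubes), hence independent of $(S_1,\dots,S_i)$ by the hypothesis that $S_l$ depends only on the points in $\bar R_l$, hence still uniform in $C$ given $\mathcal F_i$. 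The disjointness is a short grid-alignment computation: writing $C = \prod_k [\tfrac{a_k}{m},\tfrac{a_k+1}{m})$, the fact that $C$ meets $T_i \setminus R_i$ forces $a_i \ge m r_i$, forces $a_j < m r_j$ for all $j > i$, and forces $a_{j^*} \ge m r_{j^*}$ for at least one $j^* < i$ (this last being precisely what distinguishes $T_i \setminus R_i$ from $R_i$); on the other hand $C$ meets a given $\bar R_l$ only if $a_l \ge m r_l$ and $a_k < m r_k$ for every $k \ne l$, which for $l \le i$ is contradicted by whichever of the two distinct indices $i, j^*$ differs from $l$. Granting this, Lemma~\ref{lem:discnull} (applied in the conditional probability space) gives $\E[\disc(T_i \setminus R_i)\mid \mathcal F_i] = 0$ almost surely; taking expectations and summing over $i$ then yields $\E\disc(B) = \sum_{i=1}^d \E\disc(R_i)$, as desired. (If some $r_j$ equals $0$, several of the $R_i$ and $T_i$ degenerate to the empty set; this case can be dealt with directly, and in the intended application all $r_j$ are positive.)

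The decomposition and the vanishing of $\disc(B_0)$ are routine; the one genuinely delicate point is making the conditional application of Lemma~\ref{lem:discnull} rigorous. The crux there is the geometric claim that the cubes feeding $T_i \setminus R_i$ stay clear of the regions $\bar R_1,\dots,\bar R_i$ that carry the randomness of $S_1,\dots,S_i$ --- this is exactly where the hypothesis ``$S_i$ is determined by the points in $\bar R_i$'' and the assumption that the $r_j$ are integer multiples of $1/m$ both come into play --- together with the bookkeeping of choosing $\sigma(S_1,\dots,S_i)$ (rather than all the $S_j$) as the conditioning $\sigma$-algebra and checking that it makes $T_i \setminus R_i$ measurable.
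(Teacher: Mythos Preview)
Your proof is correct, and it uses the same two ingredients as the paper---conditioning on the $S_j$ to freeze the random regions, then invoking Lemma~\ref{lem:discnull} on the leftover pieces---but the decomposition and the conditioning strategy differ. The paper writes $B$ directly as the disjoint union of the $R_i$ and a single remainder $A := B \setminus \bigcup_i R_i$, conditions on \emph{all} of $S_1,\dots,S_d$ at once, and verifies (by exactly the grid-alignment argument you spelled out) that no cube meeting $A$ lies in any $\bar R_j$; one application of Lemma~\ref{lem:discnull} then gives $\E[\disc(A)\mid S_1,\dots,S_d]=0$, and the identity follows. Your telescoping $B = B_0 \sqcup T_1 \sqcup \dots \sqcup T_d$, followed by showing $\E\disc(T_i\setminus R_i)=0$ via conditioning on $\sigma(S_1,\dots,S_i)$ for each $i$ separately, reaches the same conclusion but requires $d$ passes through the lemma and the associated bookkeeping. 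The paper's single-shot version is tidier; your version has the minor conceptual advantage of isolating, for each $i$, precisely which $S_j$ the leftover $T_i\setminus R_i$ actually depends on, and of making explicit that $\disc(B_0)=0$ holds deterministically rather than only in expectation.
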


Note that the lemma in particular covers the case that the $S_i$ are chosen as to maximize the signed discrepancy of the $R_i$. Consequently, with this lemma we can construct a box with large (expected) discrepancy by finding $R_i$ with large signed discrepancy. We speculate that this \emph{construction principle} can be useful in other lower bound proofs for jittered sampling as well.

\begin{proof}[Proof of Lemma~\ref{lem:construct}]
  Let us first condition on a fixed outcome of the $S_i$, that is, let $s_i \in [r_i,1)$ for all $i \in [1..d]$ and we condition on $S_i = s_i$ for all $i \in [1..d]$. In this conditional probability space, by construction, all points in cubes not contained in $\bar R := \bigcup_{i \in [d]} \bar R_i$ are uniformly distributed in their cube. Also, no cube intersects both $A := B \setminus \bigcup_{i \in [d]} R_i$ and $\bar R$. Hence all points in cubes with non-empty intersection with $A$ are distributed uniformly in their cube. By Lemma~\ref{lem:discnull} we have $\E[\disc(A) \mid S_1 = s_1, \dots, S_d = s_d] = 0$. We thus have
\begin{align*}
  \E[&\disc(B) \mid S_1 = s_1, \dots, S_d = s_d] \\ 
	&= \E\left[\disc(A) + \sum_{i=1}^d \disc(R_i) \,\middle|\, S_1 = s_1, \dots, S_d = s_d\right] \\
	&= \sum_{i=1}^d \E[\disc(R_i) \mid S_1 = s_1, \dots, S_d = s_d].
\end{align*}
Hence the law of total expectation gives
\begin{align*}
  \E\disc(B) & = \E[\E[\disc(B) \mid S_1 = s_1, \dots, S_d = s_d]] \\
	&= \E\left[\sum_{i=1}^d \E[\disc(R_i) \mid S_1 = s_1, \dots, S_d = s_d]\right]\\
	&= \sum_{i=1}^d \E[\E[\disc(R_i) \mid S_1 = s_1, \dots, S_d = s_d]]\\
	&= \sum_{i=1}^d \E \disc(R_i).
\end{align*}
	We thus have $\E D^*(P) \ge \E|\disc(B)| \ge \E\disc(B) = \sum_{i=1}^d \E\disc(R_i)$.
\end{proof}

Before giving the precise proof of Lemma~\ref{lem:lowermain}, let us give a brief outline of the main ideas and compare them to the proof of Pausinger and Steinerberger~\cite{PausingerS16}. The main argument of~\cite{PausingerS16} is the following. Let $x_1, \dots, x_d \in [\frac{m-1}{m},1)$ and $x = (x_1,\dots,x_d)$. If $m$ is sufficiently large compared to $d$, then the discrepancy of the box $B = [0,x)$ is very close to the discrepancy of the union $R := R_1 \cup \dots \cup R_d$ of the slices $R_i := R_i(x_i) := [0,\frac{m-1}m)^{i-1} \times [\frac{m-1}m,x_i) \times [0,\frac{m-1}m)^{d-i}$. Note that here the above lemma would have directly shown that the expected star discrepancy of $P$ is at least the discrepancy of $R$. By construction, the discrepancy of $R_i(x_i)$ has the same distribution as the discrepancy of the interval $[0,m(x_i-\frac{m-1}m))$ in a one-dimensional uniformly distributed random set of $(m-1)^{d-1}$ points in $[0,1)$. This one-dimensional discrepancy problem can be analyzed quite well, in particular, the expected maximum discrepancy (over all choices of $x_i$) can be determined. Consequently, the expected maximum discrepancy of a suitable choice of $B$ is at least $d/2$ times this number (the factor of $1/2$ stems from the fact that we need the discrepancies of the  $R_i$ to have the same sign).

From a broader perspective, the main idea of~\cite{PausingerS16} is to regard all boxes $B = [0,x)$ with $x$ lying in the upper right sub-cube $C^+ := [\frac{m-1}m,1)^d$ -- and only these -- and to then exploit that for these the discrepancy is well described by one-dimensional discrepancies, which can be analyzed very precisely. While the reduction to the one-dimensional discrepancy problem allow a very precise analysis of the maximum discrepancy of a box $[0,x)$, $x \in C^+$, this approach carries the risk that the restricted choice of boxes underestimates the star discrepancy significantly. 

For this reason, we follow a different road. We do not restrict ourselves to boxes $[0,x)$, $x \in C^+$, but overcome the increased complexity of the larger range for $x$ by restricting ourselves to a suitable \emph{discrete} set of choices for $x$. Taking $r = 1 - \frac 1m \lfloor \frac md \rfloor  \approx 1- \frac 1d$, our $x$ will be such that all $x_i$ are in $[r,1)$ and are integral multiples of $\frac 1 {2m}$. Clearly, with this relatively small discrete set of boxes, our bounds will necessarily be off the truth by constant factors. However, the more diverse set of boxes together with the right (not very difficult) combinatorial way of selecting a large-discrepancy box among them will enable us to prove the stronger (and in fact asymptotically tight) lower bound. 

The main combinatorial observation is that for all $j =0, \dots, \lfloor \frac md \rfloor-1$, the rectangle $R'_1 = [r+ \frac jm,r+ \frac jm+ \frac{1}{2m}) \times [0,r)^{d-1}$ has the same discrepancy distribution, which is the deviation of a binomial random variable with parameters $N' \ge \frac N{em}$ and $p=\frac 12$ from its expectation. Consequently, by elementary properties of the maximum of $\lfloor \frac md \rfloor$ independent binomial random variables (Lemma~\ref{lem:maxbin}), with good probability there is a choice for $j$ such that the signed discrepancy of $R'_1$ satisfies $\disc(R'_1) \ge C \sqrt{N \log( \frac md)}$ for some absolute constant $C$. By construction, the discrepancy of $R'_1$ is identical to the one of $R_1 := [r,r+ \frac jm + \frac 1{2m}) \times [0,r)^{d-1}$. Repeating this argument in each dimension and taking as $B$ the smallest anchored box that contains all $R_i$, with Lemma~\ref{lem:construct} again we obtain a box with $\E \disc(B) \ge C d \sqrt{N
 \log(\frac md)}$ as desired.

\begin{proof}[Proof of Lemma~\ref{lem:lowermain}]
Let $k := \lfloor \frac md \rfloor$ and $r = \frac{m-k}{m} \ge 1-\frac 1d$. For each $i \in [d]$ let $S_i \in [r_i,1)$ be (maximal, in case of ambiguity) such that $R_i := [0,r)^{i-1} \times [r,S_i) \times [0,r)^{d-i}$ has maximum signed discrepancy $\disc(R_i) := |P \cap R_i| - N \lambda(R_i)$. Note that $\disc(R_i)$ can only be maximal when $S_i$ coincides with the $i$-th coordinate of some point of $P$, which shows that $S_i$ is a well-defined random variable. Note further that $\disc(R_i) \ge 0$ since taking $S_i = 1$ would give a rectangle containing only full cubes, and hence, having discrepancy zero.

Let $B := \prod_{i \in [d]} [0,S_i)$ be the smallest box containing all~$R_i$. Note that the value of the $S_i$ depends only on the position of the points in the cubes in $\bar R_i := [0,r)^{i-1} \times [r,1) \times [0,r)^{d-i}$. 
By Lemma~\ref{lem:construct}, we have
\begin{equation}
  \E D^*(P) \ge \sum_{i=1}^d \E\disc(R_i) = d \, \E \disc(R_1),\label{eq:dmal}
\end{equation}
where the last equality exploits the symmetry between the $R_i$. 

So it suffices to analyze $\disc(R_1)$. For $j = 0, \ldots, k-1$, let $y_j := r + \frac jm$ and $z_j := y_j + \frac 1{2m}$. Let $U_j := [r,z_j) \times [0,r)^{d-1}$ and $T_j = [y_j,z_j) \times [0,r)^{d-1}$. Since $U_j \setminus T_j$ can be written as union of cubes, we have $\disc(U_j) = \disc(T_j)$. Now each $T_j$ is composed of the ``left'' halves of exactly $N' = (m-k)^{d-1} \ge (1-\frac1d)^{d-1} m^{d-1} \ge \frac 1e m^{d-1}$ cubes. Consequently, $|P \cap T_j|$ follows a binomial distribution with parameters $N'$ and $p=\frac12$. Since no cube intersects non-trivially two different $T_j$, the discrepancies of the $T_j$ (and thus of the $U_j$) are independent. Let $X_j$, $j = 0, \ldots, k-1$, be independent random variables with binomial distribution with parameters $N'$ and $p=\frac12$. Then $\disc(R_1) \ge \max\{0, \disc(U_0), \dots, \disc(U_{k-1})\}$ and the latter is distributed as $\max\{0, X_0 - \E X_0, \dots, X_{k-1} - \E X_{k-1}\}$. 

Since $k \ge e^6$ by assumption and further $k \le m-k \le N' \le e^{N'/2}$, by the second part of Lemma~\ref{lem:maxbin} we have 
\begin{align*}
  \E \disc(R_1) &\ge \sqrt{\frac{N' (\ln k - \ln\ln k)}{2(1+\sqrt{2\ln(k)/N'}\,)}} \, \bigg(1 - \exp\bigg(-\frac {\sqrt{\ln k}} {1.5 e^{169/6} \sqrt {\pi}}\bigg)\bigg)\\
  &\ge (2e)^{-\frac 12} m^{\frac{d-1}{2}} \sqrt{\ln(\lfloor\tfrac md \rfloor) - \ln\ln(\lfloor \tfrac md \rfloor)} \, \left(1+\sqrt{\frac{2\ln(\lfloor \tfrac md \rfloor)}{(m - \lfloor \tfrac md \rfloor)^{d-1}}}\,\right)^{-\frac 12} \\ & \quad\bigg(1 - \exp\bigg(-\frac {\sqrt{\ln(\lfloor \tfrac md \rfloor)}} {1.5 e^{169/6} \sqrt {\pi}}\bigg)\bigg).
\end{align*}
Together with~\eqref{eq:dmal}, this proves the main claim. 
The last claim follows from the following three estimates.
\begin{itemize}
\item We note that $f: x \mapsto \sqrt x - \ln x$ has its global minimum (in $\R_{>0}$) at $x = 4$. Since $f(4) > 0$, we have $\sqrt x > \ln(x)$ for all $x \in \R_{>0}$. Consequently, $\ln\ln(\lfloor \tfrac md \rfloor) \le \frac 12 \ln(\lfloor \tfrac md \rfloor)$ and thus $\sqrt{\ln(\lfloor\tfrac md \rfloor) - \ln\ln(\lfloor \tfrac md \rfloor)} \ge \sqrt{\frac 12 \ln(\lfloor\tfrac md \rfloor)}$. Since $\lfloor\tfrac md \rfloor \ge e^6$, we can continue with $\sqrt{\frac 12 \ln(\lfloor\tfrac md \rfloor)} = \sqrt{\frac 14 \ln(\lfloor\tfrac md \rfloor^2)} \ge \frac 12 \sqrt{1+\ln(\frac md)}$.
\item From $d \ge 2$ and $\lfloor \tfrac md \rfloor \ge e^6$, we conclude
\begin{align*}
	1+\sqrt{\frac{2\ln(\lfloor \tfrac md \rfloor)}{(m - \lfloor \tfrac md \rfloor)^{d-1}}}
	& \le 1+\sqrt{\frac{2\ln(\lfloor \tfrac md \rfloor)}{(2 \lfloor \tfrac md \rfloor - \lfloor \tfrac md \rfloor)^{d-1}}}\\
	& \le 1+\sqrt{\frac{2\ln(\lfloor \tfrac md \rfloor)}{\lfloor \tfrac md \rfloor}}\\
	& \le 1+\sqrt{\frac{2\ln(e^6)}{e^6}} \le 1.18.	
\end{align*}
Here we used that $x \mapsto \frac{\ln(x)}{x}$ is decreasing for $x \ge e$.
\item Finally, $\lfloor \tfrac md \rfloor \ge e^6$ implies that
\begin{align*}
  1 - \exp\bigg(-\frac {\sqrt{\ln(\lfloor \tfrac md \rfloor)}} {1.5 e^{169/6} \sqrt {\pi}}\bigg)
	& \ge 1 - \exp\bigg(-\frac {\sqrt{\ln(e^6)}} {1.5 e^{169/6} \sqrt {\pi}}\bigg)
\end{align*}
is at least some positive constant.
\end{itemize}

\end{proof}

We now discuss the case that $m$ is of similar order of magnitude as $d$. The following result, in particular, extends the $\Omega(d m^{\frac{d-1}2})$ lower bound of Lemma~\ref{lem:lowermain} to arbitrary $m = \Theta(d)$ and shows that in this case, apart from constant factors, jittered sampling point sets have an expected discrepancy not smaller than uniformly distributed random point sets. In other words, only for $N = \omega(d)^d$ jittered sampling has a super-constant discrepancy advantage over fully random points.

\begin{lemma}\label{lem:smallm}
  Let $m, d \in \N_{\ge 2}$. Let $N = m^d$ and $P$ be a random $N$-point set obtained from jittered sampling. Then \[\E D^*(P) \ge \frac{4}{5 e^{8+1/6} \sqrt{2\pi}} \, \exp\bigg(- \frac{32}{(m-1)^{\frac{d-1}{2}}}\bigg) \, d (m-1)^\frac{d-1}{2}.\]
In particular, if $m \ge \gamma d$ for some constant $\gamma>0$, then $\E D^*(P) \ge C dm^\frac{d-1}{2}$ for some constant $C > 0$ which depends only on $\gamma$. 

If $\,\Gamma d \ge m \ge \gamma d$ for some constants $\Gamma \ge \gamma > 0$, then $\E D^*(P) \ge C' \sqrt{dN}$ for a constant $C' > 0$ which depends only on $\gamma$ and $\Gamma$.  
\end{lemma}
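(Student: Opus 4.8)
The plan is to apply the construction principle of Lemma~\ref{lem:construct}, but using only a single binomial random variable per coordinate rather than a maximum over $\lfloor m/d\rfloor$ of them as in Lemma~\ref{lem:lowermain}. First I would set $r=\frac{m-1}m$ (a multiple of $\frac1m$) and, for each $i\in[d]$, let $S_i\in[r,1)$ be maximal such that $R_i:=[0,r)^{i-1}\times[r,S_i)\times[0,r)^{d-i}$ has maximum signed discrepancy. Exactly as in the proof of Lemma~\ref{lem:lowermain}, $S_i$ is a well-defined random variable depending only on the points lying in the $(m-1)^{d-1}$ cubes composing $\bar R_i:=[0,r)^{i-1}\times[r,1)\times[0,r)^{d-i}$, so the hypotheses of Lemma~\ref{lem:construct} hold; together with the symmetry between the coordinates this gives $\E D^*(P)\ge\sum_{i=1}^d\E\disc(R_i)=d\,\E\disc(R_1)$.

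Next I would reduce $\E\disc(R_1)$ to a binomial tail quantity. The choice $S_1=r+\frac1{2m}\in[r,1)$ is admissible and the empty rectangle ($S_1=r$) has discrepancy $0$, so $\disc(R_1)\ge\max\{0,\disc(U_0)\}$, where $U_0:=[r,r+\frac1{2m})\times[0,r)^{d-1}$ is precisely the union of the ``left halves'' (in the first coordinate) of the $n:=(m-1)^{d-1}$ cubes inside $\bar R_1$. Hence $|P\cap U_0|$ has a binomial distribution with parameters $n$ and $\frac12$, while $N\lambda(U_0)=\frac n2$, so $\disc(U_0)$ has the law of $X-\E X$ with $X$ binomial with parameters $n$ and $\frac12$. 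It then suffices to prove that for every integer $n\ge1$,
\[
\E[\max\{0,X-\E X\}]\ \ge\ \tfrac{4}{5e^{8+1/6}\sqrt{2\pi}}\,\sqrt n\,\exp(-32/\sqrt n),
\]
substitute $\sqrt n=(m-1)^{(d-1)/2}$, and multiply by $d$.

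To establish this inequality I would split into two cases. For $n\ge16$ I apply estimate~\eqref{eq:bino} (from the proof of Lemma~\ref{lem:maxbin}) with $\alpha=\sqrt n$: here $\alpha\ge\sqrt n$ and $\alpha+\frac n\alpha=2\sqrt n\le\frac n2$, so~\eqref{eq:bino} yields $\Pr[X\ge\E X+\sqrt n]\ge\frac{\lfloor\sqrt n\rfloor}{e^{1/6}\sqrt{2\pi}\sqrt n}\,e^{-8-32/\sqrt n}$; since $\lfloor\sqrt n\rfloor\ge\frac45\sqrt n$ for $n\ge16$, we get $\E[\max\{0,X-\E X\}]\ge\sqrt n\,\Pr[X\ge\E X+\sqrt n]\ge\frac{4}{5e^{8+1/6}\sqrt{2\pi}}\sqrt n\,e^{-32/\sqrt n}$. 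For the finitely many remaining values $1\le n\le15$ I use the crude bound $\E[\max\{0,X-\E X\}]\ge\frac n2\Pr[X=n]=\frac n2 2^{-n}$ and check directly that $\frac n2 2^{-n}$ exceeds the right-hand side on this range; this is true because $\frac n2 2^{-n}/(\sqrt n\,e^{-32/\sqrt n})=\frac{\sqrt n}2 2^{-n}e^{32/\sqrt n}$ is decreasing in $n$ on $[1,15]$ and is still far larger than $\frac{4}{5e^{8+1/6}\sqrt{2\pi}}$ at $n=15$ (the factor $e^{32/\sqrt n}$ being huge for small $n$). This proves the displayed bound of the lemma.

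The two corollaries then follow readily. Since $(m-1)^{(d-1)/2}\ge1$ we always have $\exp(-32/(m-1)^{(d-1)/2})\ge e^{-32}$; and when $m\ge\gamma d$, using $1-\frac1m\ge e^{-c/m}$ for an absolute constant $c$, we have $(m-1)^{(d-1)/2}=m^{(d-1)/2}(1-\tfrac1m)^{(d-1)/2}\ge m^{(d-1)/2}e^{-cd/(2m)}\ge e^{-c/(2\gamma)}m^{(d-1)/2}$, so $\E D^*(P)\ge C(\gamma)\,d\,m^{(d-1)/2}$; and if in addition $m\le\Gamma d$, then $d\,m^{(d-1)/2}=\sqrt{dN}\,\sqrt{d/m}\ge\Gamma^{-1/2}\sqrt{dN}$. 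I expect the only slightly delicate point of the whole argument to be the bookkeeping in the small-$n$ case---checking that the closed-form bound really is dominated by the trivial bound $\frac n2 2^{-n}$ on the finite exceptional range---while everything else is a direct application of Lemmas~\ref{lem:construct} and~\ref{lem:maxbin} (more precisely of inequality~\eqref{eq:bino}).
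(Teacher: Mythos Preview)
Your proof is correct and follows essentially the same route as the paper: reduce via Lemma~\ref{lem:construct} to $d\,\E\disc(R_1)$, bound this below by $\E\max\{0,\disc(U_0)\}$ with $U_0$ the half-slice so that $\disc(U_0)=X-\tfrac n2$ for $X\sim\mathrm{Bin}(n,\tfrac12)$, $n=(m-1)^{d-1}$, apply~\eqref{eq:bino} with $\alpha=\sqrt n$ for $n\ge16$, and treat $n<16$ separately; the corollaries are derived the same way. The only difference is in the small-$n$ case, where the paper instead passes to a very thin slice $[r,r+\tfrac{1}{2N'm})\times[0,r)^{d-1}$ and bounds the probability that it contains at least one point, whereas you stay with the half-slice and use the cruder $\E[\max\{0,X-\tfrac n2\}]\ge\tfrac n2\,2^{-n}$---both arguments are valid and yield the same constant.
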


\begin{proof}
  We use a simplified version of the proof of Lemma~\ref{lem:lowermain}. Let $r = \frac{m-1}m$. For each $i \in [d]$, choose $x_i \in [r,1)$ such that $R_i := [0,r)^{i-1} \times [r,x_i) \times [0,r)^{d-i}$ has maximum signed discrepancy $\disc(R_i) = |P \cap R_i| - N \lambda(R_i)$. Let $x = (x_1, \ldots, x_d)$ and $B := \prod_{i \in [d]} [0,x_i)$. As in the proof of Lemma~\ref{lem:lowermain}, we have $\E D^*(P) \ge d \, \E \disc(R_1)$, so again it only remains to estimate $\E \disc(R_1)$. 
  
  Clearly, we have $\E \disc(R_1) \ge \E \max\{0, \disc(R'_1)\}$ with $R'_1 := [r,r+ \frac 1{2m}) \times [0,r)^{d-1}$. Now $\disc(R'_1)$ follows a binomial distribution with parameters $N'=(m-1)^{d-1}$ and $p = \frac 12$. If $N' \ge 16$, then for $\alpha = \sqrt{N'}$ we have $\alpha + N'/\alpha \le N'/2$ and thus equation~\eqref{eq:bino} shows that with probability at least $\frac{4}{5 e^{8+1/6} \sqrt{2\pi}} e^{-32/\sqrt{N'}}$ we have a discrepancy of $\sqrt{N'}$ or more. Consequently, $\E \disc(R_1) \ge  \E \max\{0, \disc(R'_1)\} \ge \frac{4}{5 e^{8+1/6} \sqrt{2\pi}} e^{-32/\sqrt{N'}} \sqrt{N'}$. 
	
	If $N' < 16$, then we note that $R'_1 := [r,r+\frac 1 {2N'm}]\times [0,r)^{d-1}$ with probability $1 - (1 - \frac 1 {2N'})^{N'} \ge 1 - \exp(-\frac 12) \ge 0.39$ contains at least one point. Hence $\E \disc(R_1) \ge  \E \max\{0, \disc(R'_1)\} \ge 0.39 (1 - 0.5) \ge 0.19 \ge \frac{4}{5 e^{8+1/6} \sqrt{2\pi}} \cdot 4 \ge \frac{4}{5 e^{8+1/6} \sqrt{2\pi}} e^{-32/\sqrt{N'}} \sqrt{N'}$.
	
	Hence in either case, 
	\begin{align*}
	\E D^*(P) &\ge d \, \E \disc(R_1) \ge d \frac{4}{5 e^{8+1/6} \sqrt{2\pi}} e^{-32/\sqrt{N'}} \sqrt{N'} \\
	&= \frac{4}{5 e^{8+1/6} \sqrt{2\pi}} \exp\bigg(-\frac{32}{(m-1)^{\frac{d-1}{2}}}\bigg) d (m-1)^\frac{d-1}{2}.
	\end{align*}

  If $m \ge \gamma d$ for some constant $\gamma > 0$, then using $m \ge 2$ we further estimate the above bound to $\E D^*(P) \ge \frac{4 e^{-32}}{5 e^{8+1/6} \sqrt{2\pi}}  d m^\frac{d-1}{2} (1-\frac 1m)^\frac{d-1}{2}$. Noting that $(1-\frac 1m)^m$ is increasing in $m$ and, again, that $m \ge 2$, we estimate $(1-\frac 1m)^\frac{d-1}{2} \ge (1-\frac 1m)^{m/2\gamma} \ge 2^{-1/\gamma}$. Hence there is a constant $C > 0$ depending on $\gamma$ only such that $\E D^*(P) \ge C d m^{\frac{d-1}{2}}$. When also $m \le \Gamma d$ for some constant $\Gamma$, then estimating $dm^{\frac{d-1}{2}} \ge d^{1/2} (\frac m\Gamma)^{1/2} m^{\frac{d-1}{2}} = \Gamma^{-1/2} \sqrt{dN}$ shows that we also have $\E D^*(P) \ge C' \sqrt{dN}$ for some constant $C'$ which only depends on $\gamma$ and $\Gamma$.
\end{proof}


\section{Proof of the Upper Bound}\label{sec:upper}

We now show that the lower bound proven above is tight (apart from constant factors independent of $m$ and $d$). 

\begin{theorem}\label{thm:upper}
  Let $m, d \in \N$ with $m \ge d \ge 2$. Let $N := m^d$. Let $P$ be a random set of $N$ points in $[0,1)^d$ obtained from jittered sampling. Then \[\E D^*(P) \le 60.9984 \sqrt{dm^d} \frac{\sqrt{\ln(4em/d)}+2.9599}{\sqrt{m/d}}.\] 
\end{theorem}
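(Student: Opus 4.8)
The plan is to prove the upper bound $\E D^*(P) \le C\, \sqrt{dm^d}\, \sqrt{\tfrac{1+\log(m/d)}{m/d}}$ using Aistleitner's dyadic chaining method, adapted to exploit the cube structure of jittered sampling. First I would set up the usual reduction: it suffices to control $\E \sup_{x}|\disc([0,x))|$, and by a standard bracketing/$\delta$-cover argument this can be reduced to controlling the maximal deviation over a finite grid of test boxes together with a union bound over dyadic scales. The key point specific to jittered sampling is that for a box $B = [0,x)$, the random variable $\disc(B)$ is a sum of independent contributions, one per $m$-cube $Q$: cubes entirely inside $B$ or entirely outside $B$ contribute deterministically (and sum to zero by Lemma~\ref{lem:discnull}-type reasoning), so only the cubes \emph{straddling the boundary} of $B$ carry randomness. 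If $x_i = \frac{\ell_i}{m} + t_i$ with $t_i \in [0,\tfrac1m)$, then the number of straddling cubes is at most $\sum_{i=1}^d m^{d-1} = d\, m^{d-1}$, and each contributes a bounded (hence sub-Gaussian) mean-zero term. So $\disc(B)$ is sub-Gaussian with variance proxy $O(d\, m^{d-1}) = O(dN/m)$, which is the source of the improved $\sqrt{dN/m}$ scaling relative to the $\sqrt{dN}$ of pure Monte Carlo.

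Next I would carry out the chaining. Fix a resolution and approximate each box $[0,x)$ from below and above by boxes whose corners lie in a grid of mesh $\frac{1}{m 2^{n}}$ inside each cube (so the grid has $(m 2^n)^d$ points, but the relevant combinatorial complexity at scale $n$ is governed by the boundary cubes). Along the chain, consecutive approximations differ only in the straddling cubes, and the increment of $\disc$ between two consecutive approximants is again a sub-Gaussian sum over a controlled number of straddling cube-halves; Bernstein/Hoeffding gives a tail bound, and a union bound over the $\exp(O(d (n + \log m)))$ relevant pairs at scale $n$ contributes $O(\sqrt{d\, m^{d-1} (n+\log m)})$ to the expected supremum. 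Summing the geometrically-weighted chain increments over $n \ge 0$, the dominant term is the coarsest scale, giving $\E \sup |\disc| = O\big(\sqrt{d\, m^{d-1}}\, \sqrt{\log(m/d) + 1}\big)$ after the dimension-$d$ union bound is absorbed — here one must be careful that the $d$ inside the logarithm of the number of boxes combines with the $d\,m^{d-1}$ variance proxy to produce exactly $\sqrt{d m^{d-1}\log(m/d)}$ rather than $\sqrt{d m^{d-1}}\cdot d$; this is where the hypothesis $m \ge d$ is used, so that $\log(m) \ge \log(d)$ and the per-coordinate entropy $\log m$ dominates. The bracketing error (difference between the lower and upper box approximations of the same $x$) is made negligible by taking the finest scale $n \to \infty$, since each such error is at most $N \cdot d \cdot (m 2^n)^{-1} m^{-(d-1)} = d 2^{-n}$ deterministically.

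The main obstacle I anticipate is bookkeeping the chaining union bound so that the $d$-dependence comes out as $\sqrt{d}$ (times the $\sqrt{\log(m/d)}$), not $d$ or $d^{3/2}$. The naive count of test boxes at the coarsest useful scale is $(Cm/d)^d$ or so, whose logarithm is $d \log(m/d)$; pairing this with variance proxy $d m^{d-1}$ under the square root yields $\sqrt{d m^{d-1}\cdot d\log(m/d)} = d \sqrt{m^{d-1}\log(m/d)}$ — one factor of $\sqrt d$ too many. The resolution, as in Aistleitner's and the Gnewuch--Hebbinghaus refinements, is to not take a global union bound but to chain dimension-by-dimension or to use a smarter $\delta$-cover whose size is $\exp(O(d \log(m/d)))$ only for the \emph{initial} bracketing while the chain increments at finer scales have much smaller effective entropy, OR to exploit that the straddling cubes for a given box occupy only $d$ of the $m$ "directions" so the relevant sub-Gaussian sum at each chain step has variance proxy $m^{d-1}\cdot(\text{number of active coordinates})$ with the number of active coordinates telescoping. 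Concretely I would split the supremum as $\sup_x |\disc([0,x))| \le \sum_{i=1}^d \sup |(\text{contribution of boundary cubes in coordinate }i)|$ is too lossy; instead one keeps the joint sum but organizes the $\delta$-cover coordinatewise so that the entropy integral $\int_0^{\infty}\sqrt{\log \mathcal N(\eps)}\,d\eps$ evaluates to $\sqrt{d\,m^{d-1}}\cdot\sqrt{\log(m/d)+1}$. Getting the constants explicit (the paper states $60.9984$ and $2.9599$) is then a matter of tracking the Hoeffding/Bernstein constants and the geometric series, which I would defer to the end. Finally I would translate $\sqrt{d m^{d-1}(\log(m/d)+1)}$ into the stated form $\sqrt{dm^d}\,\frac{\sqrt{\ln(4em/d)}+2.9599}{\sqrt{m/d}}$, noting $m^{d-1} = m^d/m$ and $\sqrt{dm^d/m} = \sqrt{dm^d}/\sqrt m$, and that the extra $\sqrt d$ in the numerator combines with $1/\sqrt m = \sqrt d/\sqrt{m}\cdot 1/\sqrt d$ to give the $1/\sqrt{m/d}$ denominator, absorbing the $+1$ and the base-change of the logarithm into the additive constant $2.9599$ and the $4e$ inside the log.
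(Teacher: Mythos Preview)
Your overall plan---Aistleitner's dyadic chaining, with the jittered structure entering through the fact that only boundary cubes carry variance---is exactly what the paper does. But you have miscomputed the target and thereby manufactured a phantom obstacle. Rewrite the claimed bound:
\[
\sqrt{dm^d}\,\frac{\sqrt{\ln(4em/d)}}{\sqrt{m/d}} \;=\; \sqrt{dm^d}\,\sqrt{\tfrac dm}\,\sqrt{\ln(4em/d)} \;=\; d\,m^{(d-1)/2}\sqrt{\ln(4em/d)}.
\]
So the target is $d\sqrt{m^{d-1}\log(m/d)}$, \emph{not} $\sqrt{d\,m^{d-1}\log(m/d)}$. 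Your ``naive count'' paragraph---entropy $d\log(Cm/d)$ paired with variance proxy $d\,m^{d-1}$---arrives at precisely $d\sqrt{m^{d-1}\log(m/d)}$ and then declares it ``one factor of $\sqrt d$ too many''; it is not. The vague fixes you propose afterwards (chaining dimension-by-dimension, coordinatewise entropy integrals, telescoping active coordinates) are unnecessary, and your final paragraph's attempted algebraic translation of $\sqrt{dm^{d-1}(\log(m/d)+1)}$ into the stated form cannot work because the two expressions differ by $\sqrt d$.

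The one clean idea you are circling but never state crisply is this: take the \emph{coarsest} cover in the chain to be the $m$-grid $\Gamma_0 = \{0,\tfrac1m,\dots,1\}^d$ itself. For every $x\in\Gamma_0$ the box $[0,x)$ is a union of full $m$-cubes, so $\disc([0,x))=0$ deterministically; hence the chain contributes nothing at level~$0$, and the dyadic refinement starts from $\delta_0 = d/m$ rather than from $\delta_0 = 1$. From there the paper simply uses Gnewuch's off-the-shelf $\delta_i$-covers of size at most $(4e/\delta_i)^d$ for $\delta_i = 2^{-i}d/m$, applies Bernstein to each increment $B_i^x$ (variance $\le N\lambda(B_i^x)\le N\delta_{i-1}=2^{-(i-1)}d\,m^{d-1}$), takes a union bound over the $(4e\,2^i m/d)^d$ possible increments at level~$i$, and sums the resulting contributions $\sim d\sqrt{m^{d-1}}\sqrt{2^{-i}\bigl(i+\log(m/d)\bigr)}$ over $i$. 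The hypothesis $m\ge d$ is used only to make $\delta_0 = d/m \le 1$, so that $\Gamma_0$ is a nontrivial starting cover; no refinement ``inside each cube'' and no per-coordinate bookkeeping is needed.
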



The main reason why the upper bound proof of Pausinger and Steinerberger~\cite{PausingerS16} does not give the right order of magnitude is the following. Using a similar reduction to one-dimensional discrepancies as in their lower bound proof, Pausinger and Steinerberger again are able to give a strong bound (including an exponentially decreasing tail) for the maximum discrepancy among all boxes with upper right corner lying in the same $m$-cube. To obtain an upper bound valid for all boxes, a union bound is employed. Such a  union bound, naturally, ignores any positive correlation between the discrepancies of boxes with corner point in different, but close-by $m$-cubes. So from a broader perspective, the proof again does a very precise analysis inside the $m$-cubes, but ignores the overall combinatorial structure of the problem. 

Two arguments have been used in the past to better exploit the positive correlation of similar boxes. Heinrich et al.~\cite{HeinrichNWW01} used deep results of Talagrand and Haussler from the theory of empirical processes to give the first proof of the $O(\sqrt{dN})$ discrepancy bound for $N$ independent uniformly distributed random points in $[0,1)^d$. With a non-trivial, purely combinatorial decomposition argument called \emph{dyadic chaining}, Aistleitner~\cite{Aistleitner11} and later Gnewuch and Hebbinghaus~\cite{GnewuchH21} reproved this bound and gave explicit (and small) values for the leading constant

In the following, we show that Aistleitner's {dyadic chaining} technique can also be used for the non-uniformly distributed point sets stemming from jittered sampling. The main difference, and reason for the stronger discrepancy bound, comes from noting that the grid points $\Gamma_0 = \{0, \frac1m, \frac 2m, \dots, 1\}^d$ form a $\frac dm$-cover (see below for a definition) such that each grid point $x \in \Gamma_0$ defines a rectangle $[0,x)$ with discrepancy $0$. Consequently, we can start the dyadic chaining construction with these grid points as coarsest cover. Note that this is not a very efficient (that is, small) cover, but this has no influence on the overall efficiency of the construction as one can verify from the proof below. 

The stochastic dependencies in the jittered sampling random experiment, interestingly, impose no additional difficulties. Still the number of points in an arbitrary measurable set can be written as sum of independent $0,1$ random variables. Unlike in the proof of~\cite{Aistleitner11}, these are not identically distributed, but this has no influence on the applicability of most Chernoff-type large deviation bounds.

Of course, in addition to these observations, it remains to estimate the expected star discrepancy in an analogous way as done in~\cite{Aistleitner11}, which requires some care. As for the lower bound in the previous section, in this article we do not take great care for obtaining a small leading constant, thus simplifying some calculations as compared to~\cite{Aistleitner11} and~\cite{GnewuchH21}.	The reader familiar with Aistleitner's proof will also note that we prefer to work with half-open rectangles, but clearly this makes no difference.

We recall the definitions of $\delta$-covers and $\delta$-bracketing covers. We use the notation  $\overline{[x,y)} := [0,y) \setminus [0,x)$ for all $x, y \in [0,1)^d$ with $x \le y$. Let $\delta>0$. A set $\Gamma \subseteq [0,1]^d$ is called \emph{$\delta$-cover} if for each $y \in [0,1)^d$ there are $x,z \in \Gamma \cup \{0\}$ such that $x \le y \le z$ and $\lambda(\overline{[x,z)}) \le \delta$. In particular, and this is what we will need only, we have $\lambda(\overline{[x,y)}) \le \delta$. A set $\Delta \subseteq ([0,1]^d)^2$ is called \emph{$\delta$-bracketing cover} if for each $x \in [0,1)^d$ there is a pair $(v^x,w^x) \in \Delta$ such that $v^x \le x \le w^x$ and $\lambda(\overline{[v^x,w^x)}) \le \delta$. 

\begin{proof}[Proof of Theorem~\ref{thm:upper}]
  We start by defining a sequence of $\delta$-covers of increasing precision. Let $\delta_0 := \frac dm$ and $\Gamma_0 :=  \{0,\frac 1m,  \frac 2m, \dots, 1\}^d$. Then $\Gamma_0$ is a $\delta_0$-cover. Let $K := \lfloor \frac{d-1}{2} \log_2 m \rfloor$. For $i = 1, \dots, K-1$, let $\delta_i := 2^{-i} \frac dm$ and let $\Gamma_i$ be a $\delta_i$-cover with $|\Gamma_i| \le (4e/\delta_i)^d = (4e 2^i \frac md)^d =: \gamma_i$. Finally, let $\delta_K := 2^{-K} \frac dm$ and let $\Delta_K$ be a $\delta_K$-bracketing cover with $|\Delta_K| \le (4e/\delta_K)^d  =: \gamma_K$. Such covers exist by Theorem 1.15 of~\cite{Gnewuch08}, see also Lemma~1 of~\cite{Aistleitner11}. 
  
  By definition of bracketing covers, for each $x \in [0,1)^d$ there is a pair $(v_K^x,w_K^x) \in \Delta_K$ such that $v_K^x \le x \le w_K^x$ and $\lambda(\overline{[v_K^x,w_K^x)}) \le \delta_K$. By elementary properties of the discrepancy function, we have \[|\disc([0,x))| \le \max\big\{|\disc([0,v_K^x))|, |\disc([0,w_K^x))|\big\} + N \delta_K.\] Consequently, 
  \begin{align}
  D^*(P) &\le N \delta_K + \max\big\{|\disc([0,v_K^x))|, |\disc([0,w_K^x))| \,\big|\, x \in [0,1)^d\big\}.\label{eq:disc1}
  \end{align}
  
  Note that $N \delta_K \le \frac{\sqrt{dN}}{\sqrt{m/d}}$ is of asymptotic order not larger than the bound we aim at. Consequently, it suffices in the following to analyze 
	\[
	\max\left\{|\disc([0,v_K^x))|, |\disc([0,w_K^x))| \middle| x \in [0,1)^d\right\}.
	\]
  
  To this aim, note that for each $i \in [0..K-1]$ and each $x \in [0,1)^d$ there is a $v_i(x) \in \Gamma_i \cup \{0\}$ such that $v_i(x) 
  \le x$ and $\lambda(\overline{[v_i(x),x)}) \le \delta_i$. 
  
  For each $x \in [0,1)^d$ we define $p_{K+1}^x := w_K^x$, $p_K^x := v_K^x$, and recursively for $i = K-1, \dots, 0$, we define $p_i^x := v_i(p_{i+1}^x)$. By construction, the sets $B_i^x := \overline{[p_{i-1}^x,p_{i}^x)}$, $i = 1, \dots, K+1$, are disjoint. By the additivity of $\disc(\cdot)$ and thus the subadditivity of $|\disc(\cdot)|$, we obtain 
  \begin{align}
  |\disc([0,v_K^x))| &\le \sum_{i = 1}^{K} |\disc(B_i^x)|, \label{eq:disc2a}\\
  |\disc([0,w_K^x))| &\le \sum_{i = 1}^{K+1} |\disc(B_i^x)| \label{eq:disc2b}.
  \end{align}
    
  For this reason, we now proceed by analyzing the discrepancies of the sets $B_i^x$. To this aim, keep in mind that (i)~we have $\lambda(B_i^x) \le \delta_{i-1}$ for all $i$ and $x$, and that (ii)~for fixed $i$, the number of different $B_i^x \neq \emptyset$ is at most $\gamma_i = (4e2^i\frac md)^d$, for $i \in [K]$, and $\gamma_{K+1} := \gamma_K$ when $i = K+1$. 
  
  Let us first regard an arbitrary measurable set $S \subseteq [0,1)^d$. Let $\QQ$ be the set of all elementary cubes $\prod_{j=1}^d [\frac{q_j-1}{m},\frac{q_j}{m})$, $q_1, \dots, q_d \in [m]$. For each $Q \in \QQ$, we have \[\E[|P \cap S \cap Q|] = \Pr[P \cap S \cap Q \neq \emptyset] = \frac{\lambda(S \cap Q)}{\lambda(Q)} = N \lambda(S \cap Q).\] 
  Consequently, we may write 
  \begin{align*}
  \disc(S) &= |P \cap S| - N \lambda(S) \\
  &= \sum_{Q \in \QQ} \big(|P \cap S \cap Q| - N \lambda(S \cap Q)\big)\\
  &= \sum_{Q \in \QQ} \big(|P \cap S \cap Q| - \E[|P \cap S \cap Q|]\big)
  \end{align*}
  as a sum of $N$ independent random variables $Z_Q := |P \cap S \cap Q| - \E[|P \cap S \cap Q|]$, each taking values in the interval $[-1,1]$, each having expectation $\E[Z_Q]=0$, and each having variance $\Var[Z_Q] = \Var[|P \cap S \cap Q|] \le \E[|P \cap S \cap Q|] = N \lambda(S \cap Q)$. Consequently, Bernstein's inequality gives \[\Pr[|\disc(S)| \ge t] \le 2 \exp\bigg(-\frac{t^2}{2 \sum_Q \Var[Z_Q] + 2t/3}\bigg) \le 2 \exp\bigg(-\frac{t^2}{2N\lambda(S) + 2t/3}\bigg).\]

  For all $i \in [K+1]$ and $\ell \in \R_{\ge 1}$ let $t_{i,\ell} := 2 C \ell d \sqrt{\frac{N}{m} \frac{\ln(2^{2i} 4e (m/d))}{2^{i-1}}}$ for some $C \ge 1$. Let $x \in [0,1)^d$ and $i \in [K+1]$. By the above, we have 
  \begin{align*}
  \Pr[|\disc&(B_i^x)| \ge t_{i,\ell}] \\
  &\le 2 \exp\bigg(-\frac{t_{i,\ell}^2}{2N\delta_{i-1} + 2t_{i,\ell}/3}\bigg)\\
  &\le 2 \exp\bigg(-\frac{t_{i,\ell}^2}{2 \max\{2N\delta_{i-1},2t_{i,\ell}/3\}}\bigg)\\
  &\le 2 \max\bigg\{\exp\bigg(-\frac{t_{i,\ell}^2}{4 N\delta_{i-1}}\bigg), \exp\bigg(-\frac{t_{i,\ell}^2}{4 t_{i,\ell}/3}\bigg)\bigg\}\\
  &\le 2 \bigg(\exp\bigg(-\frac{t_{i,\ell}^2}{4 N\delta_{i-1}}\bigg) + \exp\bigg(-\frac{t_{i,\ell}^2}{4 t_{i,\ell}/3}\bigg)\bigg)\\
  &= 2 \exp\bigg(-\frac{4 \frac{C^2 d^2 \ell^2 N}{m} 2^{-i+1} \ln(2^{2i} 4e (m/d))}{4N 2^{-i+1} (d/m)}\bigg) \\
  & \quad + 2 \exp\bigg(-\tfrac 32 C \ell d \sqrt{\frac{N}{m} \frac{\ln(2^{2i} 4e (m/d))}{2^{i-1}}}\,\bigg)\\
  &\le 2 \exp(-d \ell^2 \ln(2^{2i} 4e (m/d))) + 2 \exp\bigg(-\tfrac 32 C  \sqrt{\frac Nm \frac{\ln(2^{2i})}{m^{(d-1)/2}}}\,\bigg)^{d\ell} \\
  &= 2 (2^{2i} 4e (m/d))^{-d \ell^2} + 2 \exp\big(\tfrac 3 {12} C  m^{(d-1)/4} \sqrt{2i \ln(2)}\,\big)^{-6d\ell} \\
  &\le 2 (2^{2i} 4e (m/d))^{-d \ell^2} + 2 \big(\tfrac 14 C  m^{(d-1)/4} \sqrt{2i \ln(2)}\,\big)^{-6d\ell} =: q_{i,\ell}.
  \end{align*}

  With a simple union bound, choosing $C = 8^{1/12} (e/4)^{1/6} 4/\sqrt{\ln(2)} = 2^{23/12} e^{1/6} / \sqrt{\ln(2)} \le 5.3573$, 
and recalling that $d \ge 2$ and $\ell \ge 1$, we estimate
  \begin{align*}
  \Pr[\exists x \in &[0,1)^d : |\disc(B_i^x)| \ge t_{i,\ell}] \le \gamma_i q_{i,\ell}\\
  & \le (4e 2^{i} \tfrac md)^d  \cdot 2 (2^{2i} 4e \tfrac md)^{-d\ell^2} \\
  & \quad + (4e m^{(d-1)/2} \tfrac md)^d \cdot 2 \big(\tfrac 14 C m^{(d-1)/4} \sqrt{2i \ln(2)}\,\big)^{-6d\ell}\\
  & \le 2 \cdot 2^{-id\ell^2} +  2 \bigg(m^{-d+2} i^{-3} \frac{4e 4^6}{2 C^6 2^3 \ln(2)^3}  \bigg)^{d\ell}\\
  & \le 2 \cdot 2^{-id\ell^2} +  2 \big(\sqrt{1/8} \, m^{-d+2}  i^{-3} \big)^{d\ell}\\
  & \le 2 \cdot 4^{-i\ell^2} +  2 \cdot 8^{-\ell} m^{-d+2}  i^{-6} \le 2(4^{-i\ell} + 8^{-\ell}i^{-6}).
  \end{align*}
  Another union bound together with $\ell \ge 1$ and $\zeta(6) = \pi^6 / 945 \le 1.0174$ gives
  \begin{align*}  
  \Pr[\exists i \in &[K+1] \exists x \in [0,1)^d : |\disc(B_i^x)| \ge t_{i,\ell}] \\
  & \le \sum_{i = 1}^{K+1} 2(4^{-i\ell} + 8^{-\ell}i^{-6})\\
  & \le 2 \cdot 4^{-\ell} \sum_{i = 0}^\infty 4^{-i} + 2 \cdot 8^{-\ell} \sum_{i = 1}^\infty i^{-6}\\
  & = 4^{-\ell} \tfrac 8 3 + 2 \cdot 8^{-\ell} \zeta(6) \le  4^{-\ell} (\tfrac 83+\zeta(6)) < 3.7 \cdot 4^{-\ell}.
  \end{align*}  
  We compute 
  \begin{align*}
  \sum_{i=1}^{K+1} t_{i,\ell} & \le \sum_{i = 1}^\infty 2 C \ell d \sqrt{\frac{N}{m} \frac{\ln(2^{2i} 4e (m/d))}{2^{i-1}}}\\
  & \le 2 C \ell d \sqrt{\frac{N}{m}} \bigg( \sum_{i=1}^\infty \sqrt{\frac{2i \ln(2)}{2^{i-1}}} + \sum_{i=1}^\infty \sqrt{\frac{\ln(4em/d)}{2^{i-1}}}\,\bigg)\\
  & \le 2 C \ell d \sqrt{\frac{N}{m}} \bigg(5(1+\sqrt 2 / 2) \sqrt{2 \ln 2}  + \sqrt{\ln(4em/d)} \frac{\sqrt 2}{\sqrt 2 - 1}\bigg) =: \ell D
  \end{align*}
  using the estimate $\sum_{i=1}^\infty \sqrt{i/2^{i-1}}  \le 1 + \sum_{i=2}^\infty i/2^{i/2} = 1 + \sum_{i=1}^\infty (2i)/2^{i} + \sum_{i=1}^\infty (2i+1)/2^{i+1/2} = 1 + 2 \sum_{i=1}^\infty i/2^i + \sqrt 2 \sum_{i=1}^\infty i/2^i + 2^{-1/2} \sum_{i=1}^\infty 2^{-i} = 1 + 4 + 2\sqrt 2 + 2^{-1/2} = 5(1+\sqrt 2 / 2)$.
  
  Consequently, by~\eqref{eq:disc1},~\eqref{eq:disc2a}, and~\eqref{eq:disc2b}, we have $\Pr[D^*(P) \ge \ell D+N\delta_K] \le 3.7 \cdot 4^{-\ell}$ for all $\ell \ge 1$. From this we easily derive the following bound on the expectation of $D^*(P)$.
  \begin{align*}
  \E D^*(P) &= \int_0^\infty \Pr[D^*(P) \ge x] dx\\
  &\le D + N\delta_K + D \int_1^\infty \Pr[D^*(P) \ge \ell D + N\delta_K] d\ell\\
  &\le D (1 + 3.7 \cdot 4^{-1} / \ln(4)) +N\delta_K \le 1.6673 D +N\delta_K\\ 
  &\le 1.6673 \cdot 2 C d \sqrt{\frac{N}{m}} \bigg(5 (1+\sqrt 2/2) \sqrt{2 \ln 2}  + \frac{\sqrt{2 \ln(4em/d)}}{\sqrt 2 - 1}\bigg) + \sqrt{\frac{dN}{m/d}} \\
  &\le 17.8645 \sqrt{\frac{dN}{m/d}} \big(10.0499 + 3.4143 \sqrt{\ln(4em/d)} + 1/17.8645\big)\\
  &\le 60.9948 \sqrt{\frac{dN}{m/d}} \big(\sqrt{\ln(4em/d)} + 2.9599\big).\qedhere
  \end{align*}  
\end{proof}

\section{Small Numbers of Points}\label{sec:small}

The results proven in the previous sections indicate that jittered sampling will not lead to discrepancies of asymptotic order smaller than those of independent random points when $m = O(d)$. We shall not prove this here, since an adaptation of our lower bound proof above to small numbers of points or an adaptation of the proof in~\cite{Doerr14} to jittered sampling both appear not straight-forward, and since at the same time it is very hard to imagine that jittered sampling has an asymptotic advantage in some regime $m = o(d)$ given that we have disproven such an advantage for $m=d$.

With no asymptotic advantage expected for $m = O(d)$, one could think that jittered sampling is interesting only for numbers $N$ of points that are super-exponential in $d$, namely at least of the order $\omega(d)^d$. We now argue that this might be a too pessimistic point of view, simply because possible constant factor improvements or even advantages in lower-order terms can be interesting as well. We note that Pausinger and Steinerberger~\cite{PausingerS16} conducted also a small experimental analysis and observed, e.g., that the (empirical) expected discrepancy of $N = 3^5$ points in dimension $d = 5$ (that is, we have $m=3$ in the jittered sampling case) is $0.1200$ for independent random points and $0.1046$ for jittered sampling. Such advantages may not appear large, but given that they come for free (generating a jittered sampling is not more costly that generating an independent sample), they are interesting. In addition, it is known that such point sets satisfy the known discrepancy guarantees for independent points sets (we discuss this in more detail below), so also in terms of proven quality guarantees switching from independent random points to jittered sampling gives no disadvantages.

Since we need $m \ge 2$ in our definition of jittered sampling, we necessarily have a number of points that is at least exponential in $d$. We argue now that also for smaller numbers of points some forms of jittering can be defined which promise to be at least as good as independent points, with lower-order advantages likely to appear. Assume that $N$ is a power of two, but less than $2^d$, the smallest number for which our jittered sampling was defined. Hence $N = 2^{d'}$ for some $d' < d$. We define a variant of jittered sampling of $N$ points as follow. Consider the family $B_x = \prod_{i=1}^{d'} [x_i,x_i+\tfrac 12) \times [0,1)^{d-d'}, x \in \{0,\tfrac 12\}^{d'}$ of boxes and define the point set $P$ by choosing from each such box independently a point uniformly distributed in this box. 
This set of points is again easy to construct and is also known to satisfy the known discrepancy guarantees of independent random points. 

\subsection{Discrepancy Guarantees from the Result of Gnewuch and Hebbinghaus}

We now give the promised details on why the strongest currently known discrepancy guarantee for a broad class of random point sets by Gnewuch and Hebbinghaus~\cite{GnewuchH21} (which imply those of Aisleitner~\cite{Aistleitner11} and Aistleitner and Hofer~\cite{AistleitnerH14}) also applies to the jittered samplings proposed above. All this can be found in~\cite{WnukGH20}. We start by describing the discrepancy result of~\cite{GnewuchH21}.

Let $\calD_0 := \{[0,b) \setminus [0,a) \mid a, b \in [0,1)^d, a \le b\}$ and $\gamma \ge 1$. A collection $X_1, \dots, X_N$ of random points is called $\gamma$-negatively dependent (with respect to $\calD_0$) if for all $D \in \calD_0$ and all $J \subseteq [N]$ we have
\begin{align*}
  \Pr[\forall j \in J : X_j \in D] &\le \gamma \prod_{j \in J} \Pr[X_j \in D],\\
  \Pr[\forall j \in J : X_j \notin D] &\le \gamma \prod_{j \in J} \Pr[X_j \notin D].
\end{align*}
Gnewuch and Hebbinghaus~\cite[Theorem~4.4]{GnewuchH21} have shown that for all $c > 0$ and $\rho \ge 0$ the discrepancy of an $e^{\rho d}$-negatively dependent collection of $N$ points in $[0,1)^d$, each of which is uniformly distributed in $[0,1)^d$, is at most $c \sqrt{dN}$ with probability at least $1 - e^{-(1.6741 c^2 - 10.7042 - \rho)d}$. Since independent uniformly distributed random points are $1$-negatively dependent by definition, this reproves the known $O(\sqrt{dN})$ upper bound for the expected discrepancy of uniformly distributed random point sets, however with better absolute bounds such as the result that with positive probability the discrepancy is at most $2.5278 \sqrt{dN}$.

This result cannot be immediately applied to jittered sampling since there the individual points are not uniformly distributed in $[0,1)^d$. The solution proposed in~\cite{WnukGH20} is to regard the symmetrization of the jittered sampling set. Let $B_1, \dots, B_N$ be a partition of $[0,1)^d$ into sets of equal Lebesgue measure $1/N$. Let $X_1, \dots, X_N$ be a jittered sampling point set with respect to these, that is, for each $j \in [N]$ the point $X_j$ is chosen uniformly from $B_j$. Let $\sigma : [N] \to [N]$ be a permutation chosen uniformly at random. Let $X'_i = X_{\sigma(i)}$ for all $i \in [N]$. Then each $X'_i$ is uniformly distributed in $[0,1)^d$. Unfortunately, the $X'_i$ are not independent anymore (if $X'_1 \in B_1$, for example, then $X'_2$ cannot be in $B_1$; hence $\Pr[X'_2 \in B_1 \mid X'_1 \in B_1] = 0 \neq \frac 1N = \Pr[X'_2 \in B_1]$). They are, however, $1$-negatively dependent as shown in~\cite[Theorem~4.4]{WnukGH20}. Hence the discrepancy estimate of Gnewuch and Hebbinghaus applies to the $X'_i$ and gives the same guarantees as for independent points. Since, viewed as multiset, the $X_i$ and the $X'_i$ are identically distributed, this discrepancy also holds for the jittered sampling $(X_i)$. 

\subsection{Extending Aistleitner's Upper Bound to Jittered Sampling}

Since the approach described in the previous subsection is slightly technical (we lose the independence of the points due to the symmetrization and then cope with this by establishing a negative dependence property and using a discrepancy guarantee for negatively dependent point sets), we now show that the earlier (and weaker in terms of constants) discrepancy guarantees of Aistleitner~\cite{Aistleitner11} and Aistleitner and Hofer~\cite{AistleitnerH14} quite directly apply also to jittered samplings. Since this observation might be useful also for extending future discrepancy estimates for independent uniformly distributed points to jittered sampling, we give some more details.

An easy inspection of Aistleitner's~\cite{Aistleitner11} proof reveals that the only way this upper bound analysis of the discrepancy of a uniformly distributed random point set relies on the randomness of the points is by applying Chernoff-type concentration bounds to the expected number of points in a measurable subset $I$ of $[0,1)^d$. If $X_1, \dots, X_N$ are independently and uniformly distributed in $[0,1)^d$, then the indicator random variables $\eins_I(X_j)$ for the events ``$X_j \in I$'', $j \in [N]$, are independent binary random variables with success probability $p_{I,j} = \lambda(I)$. Letting, for all $j \in [N]$, $Z_{I,j} = \eins_I(X_j) - \E \eins_I(X_j) = \eins_I(X_j) - p_{I,j} = \eins_I(X_j) - \lambda(I)$ denote the centralization of $\eins_I(X_j)$, we have $\disc_P(I) = |P \cap I| - N \lambda(I) =  \sum_{j = 1}^N Z_{I,j} =: Z_I$, that is, the discrepancy of $I$ can be written as the sum of the independent random variables $Z_{I,j}$ (having expectation zero and taking values in an interval of length~$1$) and can thus be estimated via strong concentration bounds. Here Aistleitner (see the lower half of page~536 of~\cite{Aistleitner11}) only uses the two estimates
\begin{align*}
\Pr[|Z_I|\ge t] & \le 2\exp\left(-\frac{t^2}{2 \Var[Z_I] + \frac 23 t}\right) = 2 \exp\left(-\frac{t^2}{2 N \lambda(I) (1-\lambda(I)) + \frac 23 t}\right),\\ 
\Pr[|Z_I| \ge t] & \le 2 \exp\left(-\frac{2t^2}{N}\right).
\end{align*}

We now argue that the same estimates hold for the corresponding expressions for jittered sampling, which shows that Aistleitner's proof also applies to jittered sampling. Since it takes no additional effort, we show this result for a more general type of random point sets. Let $\mu_1, \dots, \mu_N$ be probability measures on $[0,1)^d$ (equipped with the Lebesgue $\sigma$-algebra) such that for each $\lambda$-measurable set $I \subseteq [0,1)^d$ we have $\sum_{j=1}^N \mu_j(I) = N \lambda(I)$. Let $P'$ be a random point set obtained by sampling, for each $j \in [N]$ independently, a point $X'_j$ according to $\mu_j$. We observe that this construction includes both independent uniformly distributed points and jittered sampling (but not the generalized jittered samplings of~\cite{WnukGH20}).

We now show that such a point set satisfies the discrepancy guarantees for independent uniformly distributed points given by Aistleitner~\cite{Aistleitner11}. Let $I$ be a measurable subset of $[0,1)^d$. By construction, the indicator random variables $\eins_I(X'_j)$, $j \in [N]$, are again independent, but now $p'_{I,j} = \mu_j(I)$. Let $Z'_{I,j} := \eins_I(X'_j) - \E \eins_I(X'_j) = \eins_I(X'_j) - p'_{I,j} = \eins_I(X'_j) - \mu_j(I)$. Then again $\disc_{P'}(I) = |P' \cap I| - N \lambda(I) =  \sum_{j = 1}^N Z'_{I,j} =: Z'_I$. Since $Z'_I$ is a sum of independent random variables (each with expectation zero and taking values in an interval of length $1$), we again have
\begin{align*}
\Pr[|Z'_I|\ge t] & \le 2\exp\left(-\frac{t^2}{2 \Var[Z'_I] + \frac 23 t}\right) \le 2 \exp\left(-\frac{t^2}{2 N \lambda(I) (1-\lambda(I)) + \frac 23 t}\right)\\ 
\Pr[|Z'_I| \ge t] & \le 2 \exp\left(-\frac{2t^2}{N}\right).
\end{align*}
For the second inequality in the first line, note that $\Var[Z'_I] = \sum_{j = 1}^N p'_{I,j} (1 - p'_{I,j})  = \sum_{j = 1}^N (p'_{I,j}  - (p'_{I,j})^2)$. We have $\sum_{j=1}^N p'_{I,j} = \sum_{j=1}^N \mu_j(I) = N \lambda(I)$ by construction. Knowing that a sum of squares of $n$ variables with fixed sum $S$ is minimal for all variables being equal to $S/n$, that is, $\min\{x_1^2 + \dots + x_n^2 \mid x_1, \dots, x_n \in \R, x_1 + \dots + x_n = S\} = n \cdot (S/n)^2$, we obtain $\Var[Z'_I] \le N \lambda(I) - N \lambda(I)^2 = N \lambda(I) (1 - \lambda(I))$. Consequently, we have the same concentration bounds for the discrepancy of the set $I$ as when using independent sample points. Since Aistleitner's proof refers to the sample set only via these two estimates, his proof applies equally well to our generalized point set construction.

The same argument immediately also extends the upper bounds of Aistleitner and Hofer~\cite{AistleitnerH14} to our point sets. We are optimistic that this argument can be used for future discrepancy estimates shown roughly along these lines as well. We note that~\cite{GnewuchH21} also uses only the two concentration bounds discussed above (except that it is argued why these also extend to $\gamma$-negatively dependent random variables). Hence the only reason why the result of~\cite{GnewuchH21} does not immediately extend to the random samplings defined above is the requirement that each point is uniformly distributed in $[0,1)^d$.

\section{Conclusion}

In this work, we determined (apart from constant factors) the expected star discrepancy of a jittered sampling point set having $m^d$ points, $m \ge d$. This improves both the previous upper and lower bound and also removes the unquantified restriction that ``$m$ is sufficiently large compared to $d$''. These improvements are made possible by exhibiting suitable combinatorial structures in the problem which then allow an easier analysis via discrete probability theory. We feel that this might, also for other discrepancy problems on random point sets, be an approach more suitable than trying to describe the discrepancy directly via continuous-domain random variables.

From our main result and the known result that the known discrepancy guarantees for independent random points extend to jittered sampling (in particular for $m < d$, where we have given no guarantee), we see no reason to not generally recommend to use jittered sampling to obtain low-discrepancy point sets rather than independent random points

\subsection*{Acknowledgments}

This work was triggered by the 2nd International Workshop on Discrepancy Theory and Applications organized by Dmitriy Bilyk, Luca Brandolini, William Chen, Anand Srivastav, Giancarlo Travaglini. The author would like to thank the organizers for inviting him to this fruitful meeting. This work also profited from discussions with Michael Gnewuch and Marcin Wnuk, which are gratefully acknowledged.

\end{document}